\date{}
\newlength{\defbaselineskip}
\newcommand{\setlinespacing}[1]%
           {\setlength{\baselineskip}{#1 \defbaselineskip}}
\newcommand{\N}{{\mathbb{N}}}
\newcommand{\actaqed}{\hfill $\actabox$}
{\medskip\noindent \textit{Proof of #1. }}%
{\actaqed \medskip}
\def\cB{\mathcal B}
\def\cC{{\mathcal C}}
\def \Tr{\mathcal T}
\def \K{\mathcal K}
\def \V{\mathcal V}
\def \cF{\mathcal F}
\def \cX{\mathcal X}
\def \cM{\mathcal M}
\def\R{{\mathbb R}}
\def\Z{\mathbb Z}
\def \T{\mathbb T}
\def\bbC{\mathbb C}
\def \<{\langle}
\def\>{\rangle}
\def\ba{\mathbf a}
\def\bb{\mathbf b}
\def\bx{\mathbf x}
\def\by{\mathbf y}
\def\bk{\mathbf k}
\def\bu{\mathbf u}
\def\bm{\mathbf m}
\def\bn{\mathbf n}
\def\bs{\mathbf s}
\def\bN{\mathbf N}
\newtheorem{Theorem}{Theorem}[section]
\newtheorem{Lemma}{Lemma}[section]
\newtheorem{Remark}{Remark}[section]
\newtheorem{Corollary}{Corollary}[section]
\numberwithin{equation}{section}
\newcommand{\be}{\begin{equation}}
\newcommand{\ee}{\end{equation}}
\begin{document}

\title{Dispersion of the Fibonacci and the Frolov point sets}
\author{V.N. Temlyakov\thanks{University of South Carolina and Steklov Institute of Mathematics.  }}
\maketitle
\begin{abstract}
{It is proved that the Fibonacci and the Frolov point sets, which are known to be very good for numerical integration, have optimal rate of decay of dispersion with respect to 
the cardinality of sets. This implies that the Fibonacci and the Frolov point sets provide 
universal discretization of the uniform norm for  natural collections of subspaces of the multivariate trigonometric polynomials. It is shown how the optimal upper bounds for dispersion can be derived from the upper bounds for a new characteristic -- the smooth fixed volume discrepancy. It is proved that the Fibonacci point sets provide the universal discretization of all integral norms. }
\end{abstract}

\section{Introduction} 
\label{I} 

The concept of {\it dispersion} of a point set is an important geometric characteristic of 
a point set. It was established in a recent paper \cite{VT162} that the property of a point set to have the minimal in the sense of order dispersion is equivalent, in a certain sense, to the property of the set to provide universal discretization in the $L_\infty$ norm for  natural collections of subspaces of the multivariate trigonometric polynomials.
In this paper we study decay of dispersion of the Fibonacci and the Frolov point sets 
with respect to the cardinality of sets. We remind the definition of 
dispersion. Let $d\ge 2$ and $[0,1)^d$ be the $d$-dimensional unit cube. For $\bx,\by \in [0,1)^d$ with $\bx=(x_1,\dots,x_d)$ and $\by=(y_1,\dots,y_d)$ we write $\bx < \by$ if this inequality holds coordinate-wise. For $\bx<\by$ we write $[\bx,\by)$ for the axis-parallel box $[x_1,y_1)\times\cdots\times[x_d,y_d)$ and define
$$
\cB:= \{[\bx,\by): \bx,\by\in [0,1)^d, \bx<\by\}.
$$
For $n\ge 1$ let $T$ be a set of points in $[0,1)^d$ of cardinality $|T|=n$. The volume of the largest empty (from points of $T$) axis-parallel box, which can be inscribed in $[0,1)^d$, is called the dispersion of $T$:
$$
\text{disp}(T) := \sup_{B\in\cB: B\cap T =\emptyset} vol(B).
$$
An interesting extremal problem is to find (estimate) the minimal dispersion of point sets of fixed cardinality:
$$
\text{disp*}(n,d) := \inf_{T\subset [0,1)^d, |T|=n} \text{disp}(T).
$$
It is known that 
\be\label{1.1}
\text{disp*}(n,d) \le C^*(d)/n.
\ee
Inequality (\ref{1.1}) with $C^*(d)=2^{d-1}\prod_{i=1}^{d-1}p_i$, where $p_i$ denotes the $i$th prime number, was proved in \cite{DJ} (see also \cite{RT}). The authors of \cite{DJ} used the Halton-Hammersly set of $n$ points (see \cite{Mat}). Inequality (\ref{1.1}) with $C^*(d)=2^{7d+1}$ was proved in 
\cite{AHR}. The authors of \cite{AHR}, following G. Larcher, used the $(t,r,d)$-nets (see \cite{NX} and \cite{Mat} for results on $(t,r,d)$-nets). 

In this paper we are interested in optimal behavior of dispersion with respect to the cardinality of sets. A trivial lower bound disp*$(n,d) \ge (n+1)^{-1}$ combined with (\ref{1.1}) shows that the optimal rate of decay 
of dispersion with respect to cardinality $n$ of sets is $1/n$. In this paper we prove that 
the Fibonacci and the Frolov point sets have optimal in the sense of order rate of decay 
of dispersion. We present results on the Fibonacci point sets in Section \ref{Fib} and 
results on the Frolov point sets in Section \ref{Fro}. In Section \ref{SD} we introduce a new concept of discrepancy -- the smooth fixed volume discrepancy -- and show how 
good upper bounds of it can be used for proving optimal (in the sense of order) upper bounds for dispersion. 
These are the main results of the paper. At the end of the paper, in Section \ref{Un} we give a comment on the universal discretization of the uniform norm. In Section \ref{Lq} we prove that the Fibonacci point sets provide the universal discretization of all integral norms. The main technical result of the paper is Lemma \ref{L3.1}. This lemma is used in the direct proof of the optimal rate of convergence of dispersion of the Frolov point sets (see Theorem \ref{T4.1}). Moreover, Lemma \ref{L3.1} is used in the proof of the upper bounds for a more delicate quantity -- the smooth fixed volume discrepancy (see Theorem \ref{dT4.1}). Theorem \ref{dT4.1} implies Theorem \ref{T4.1}. We have the same phenomenon for the Fibonacci point sets: Theorem \ref{dT4.3} on the behavior of the smooth fixed volume discrepancy implies Theorem \ref{T2.1} on the behavior of dispersion. 
For further recent results on dispersion we refer the reader to papers \cite{Ull}, \cite{Rud}, \cite{Sos} and references therein. 

\section{The Fibonacci point sets}
\label{Fib}

Let $\{b_n\}_{n=0}^{\infty}$, $b_0=b_1 =1$, $b_n = b_{n-1}+b_{n-2}$,
$n\ge 2$, --
be the Fibonacci numbers. Denote the $n$th {\it Fibonacci point set} by
$$
\cF_n:= \left\{(\mu/b_n,\{\mu b_{n-1} /b_n \}),\, \mu=1,\dots,b_n\right\}.
$$
In this definition $\{a\}$ is the fractional part of the number $a$. In this section we prove 
the following upper bound on the dispersion of the $\cF_n$.

\begin{Theorem}\label{T2.1} There is an absolute constant $C$ such that for all $n$ we have
\be\label{2.1}
\text{disp}(\cF_n) \le C/b_n.
\ee
\end{Theorem}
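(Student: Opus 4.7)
My plan is to reduce the two-dimensional emptiness condition for an axis-parallel box to a one-dimensional problem about gaps of an arithmetic progression on the circle, and then to control that gap via the three-distance theorem together with classical Fibonacci identities.

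Let $B=[a_1,a_1+h_1)\times[a_2,a_2+h_2)$ be empty of $\cF_n$; we want $h_1h_2\le C/b_n$. If $h_1\le 2/b_n$ the conclusion is immediate, so assume $h_1>2/b_n$. Then $I:=\{\mu\in\{1,\dots,b_n\}:\mu/b_n\in[a_1,a_1+h_1)\}$ is a set of $M$ consecutive integers (viewed cyclically mod $b_n$) with $M\ge h_1 b_n/2$. Emptiness of $B$ is equivalent to the assertion that the $M$ numbers $\{\mu b_{n-1}/b_n\}$, $\mu\in I$, all avoid the arc $[a_2,a_2+h_2)\subset\R/\Z$. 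Since shifting $\mu$ by a constant only rotates these values on the circle, this in turn is equivalent to the statement that the set $Y_0:=\{\{j\al\}:j=0,1,\dots,M-1\}$, with $\al:=b_{n-1}/b_n$, has some circular gap of length at least $h_2$. The theorem reduces to showing
\[
(\star)\qquad\text{max circular gap of }Y_0\text{ on }\R/\Z\ \le\ C/M\qquad(1\le M\le b_n).
\]

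To prove $(\star)$, note that the continued fraction convergents of $\al=b_{n-1}/b_n$ are exactly the ratios $b_{k-1}/b_k$, and Catalan's identity $b_{m-1}b_{n-1}-b_{m-2}b_n=(-1)^{m-1}b_{n-m}$ yields
\[
\|b_k\al\|=b_{n-k-1}/b_n,\qquad 0\le k\le n-1,
\]
where $\|\cdot\|$ is the distance to the nearest integer. For $M$ with $b_k\le M<b_{k+1}$, the three-distance theorem, combined with the fact that each new point inserted into $Y_0$ subdivides one of the currently longest gaps, implies that every gap of $Y_0$ has length at most $\|b_{k-1}\al\|=b_{n-k}/b_n$. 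The classical Fibonacci identity $b_k b_{n-k}+b_{k-1}b_{n-k-1}=b_n$ then gives $b_{n-k}/b_n\le 1/b_k$, and since $M<b_{k+1}\le 2b_k$ we have $1/b_k< 2/M$, proving $(\star)$ with $C=2$.

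Combining the two halves, $h_2\le 2/M\le 4/(h_1 b_n)$, hence $h_1h_2\le 4/b_n$, as desired. The main obstacle is the bookkeeping in the three-distance step: one must track, as $M$ grows from $b_k$ to $b_{k+1}$, that each newly inserted Fibonacci iterate lands in a gap of the current maximal size, so that the uniform upper bound $\|b_{k-1}\al\|$ persists across the whole range. The required Fibonacci identities (Catalan's and the Honsberger-type sum $b_k b_{n-k}+b_{k-1}b_{n-k-1}=b_n$) are standard and can be verified by induction; the reduction from an empty axis-parallel box to a circular-gap statement is a straightforward change of variables exploiting the coprimality of $b_{n-1}$ and $b_n$.
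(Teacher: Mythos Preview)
Your approach is essentially correct and takes a genuinely different route from the paper. The paper's proof is Fourier-analytic: it places a tensor-product hat function $h_{\bu}$ on a hypothetical large empty box, notes that the Fibonacci cubature formula $\Phi_n$ then vanishes on it, and bounds the cubature error via Lemma~\ref{L2.1} (the hyperbolic-cross property of the lattice $L(n)$) together with the dyadic sum estimate of Lemma~\ref{L2.2}, reaching a contradiction. Your argument is purely arithmetic: you reduce the two-dimensional emptiness question to a one-dimensional gap problem for the rotation $j\mapsto jb_{n-1}/b_n$ on $\R/\Z$ and control the maximal gap by the three-distance theorem plus Fibonacci identities. Your route is shorter, elementary, and yields an explicit small constant; the paper's method, by contrast, is the one that generalizes --- both to the $d$-dimensional Frolov sets (Theorem~\ref{T4.1}) and, more importantly, to the smooth fixed volume discrepancy (Theorems~\ref{dT4.1}--\ref{dT4.3}), which is the paper's main contribution. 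A three-distance argument has no analogue in those settings.

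One correction in your three-distance bookkeeping (which you rightly flagged as the delicate point): the claim that for $b_k\le M<b_{k+1}$ every gap has length at most $\|b_{k-1}\alpha\|=b_{n-k}/b_n$ is off by one. For example, with $\alpha=b_5/b_6=8/13$ and $M=b_4=5$ the maximal gap is $3/13$, whereas $\|b_3\alpha\|=2/13$. The correct monotone bound is that for $M\ge b_k$ the maximal gap is at most the maximal gap at $M=b_k$, which equals $b_{n-k+1}/b_n$. Then the Honsberger-type identity $b_{k+1}b_{n-k+1}+b_k b_{n-k}=b_{n+2}$ gives
\[
M\cdot(\text{max gap})\ <\ b_{k+1}\,\frac{b_{n-k+1}}{b_n}\ \le\ \frac{b_{n+2}}{b_n}\ <\ 3,
\]
so $(\star)$ holds with $C=3$ and your argument goes through with a slightly larger absolute constant. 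This is a slip in indexing, not a gap in the method.
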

\begin{proof} We prove bound (\ref{2.1}) for the set $\cF_{n,\pi}:= \{2\pi \bx: \bx \in \cF_n\}$. 
For the continuous functions of two
variables, which are $2\pi$-periodic in each variable, we define
cubature formulas
$$
\Phi_n(f) :=b_n^{-1}\sum_{\mu=1}^{b_n}f\bigl(2\pi\mu/b_n,
2\pi\{\mu b_{n-1} /b_n \}\bigr),
$$
 called the {\it Fibonacci cubature formulas}.   Denote 
$$\mathbf y^{\mu}:=\bigl(2\pi\mu/b_n, 2\pi\{\mu
b_{n-1}/b_n\}\bigr), \quad \mu = 1,\dots,b_n,
$$
$$
\Phi(\mathbf k) := b_n^{-1}\sum_{\mu=1}^{b_n}e^{i(\mathbf k,\mathbf y^{\mu})}.
$$
Note that
\be\label{2.2}
\Phi_n (f) =\sum_{\mathbf k}\hat f(\mathbf k)\Phi(\mathbf k),\quad \hat f(\bk) := (2\pi)^{-2}\int_{\T^2} f(\bx)e^{-i(\bk,\bx)}d\bx,
\ee
where for the sake of simplicity we may assume that $f$ is a
trigonometric polynomial. It is clear that (\ref{2.2}) holds for $f$ with absolutely convergent Fourier series. 

It is easy to see that the following relation holds
\be\label{2.3}
\Phi(\mathbf k)=
\begin{cases}
1&\quad\text{ for }\quad \mathbf k\in L(n)\\
0&\quad\text{ for }\quad \mathbf k\notin L(n),
\end{cases}
\ee
where
$$
L(n) :=\bigl\{ \mathbf k = (k_1,k_2):k_1 + b_{n-1} k_2\equiv 0
\qquad \pmod {b_n}\bigr\}.
$$
Denote $L(n)':= L(n)\backslash\{\mathbf 0\}$. For $N\in\N$ define the {\it hyperbolic cross} in dimension $d$ as follows:
$$
\Gamma(N):= \left\{\bk\in\Z^d: \prod_{j=1}^d \max(|k_j|,1) \le N\right\}.
$$
The following lemma is well known (see, for instance, \cite{TBook}).

\begin{Lemma}\label{L2.1} There exists an absolute constant $\gamma > 0$
such that for any $n > 2$ for the $2$-dimensional hyperbolic cross we have
$$
\Gamma(\gamma b_n)\cap \bigl(L(n)\backslash\{\mathbf 0\}\bigr) =
\emptyset.
$$
\end{Lemma}

For $u\in (0,1]$ define the even $2\pi$-periodic hat function $h_u(t)$, $t\in [-\pi,\pi]$, as follows: $h_u(0)=1$, $h_u(t)=0$ for $t\in [u,\pi]$, linear on $[0,u]$. Then $h_u(t) = |1-t/u|$ on $[-u,u]$ and equal to $0$ on $[-\pi,\pi]\setminus (-u,u)$. Therefore,
$$
\hat h_u(k) := (2\pi)^{-1}\int_{-\pi}^\pi h_u(t)e^{-ikt}dt = \pi^{-1}\int_0^u(1-t/u)\cos (|k|t) dt.
$$
From here, using the formula
$$
\int_0^u(1-t/u)\cos (|k|t) dt = \frac{1-\cos (|k|u)}{k^2u},\quad k\neq 0,
$$
we easily obtain the following bound for $k\neq 0$
\be\label{2.4}
|\hat h_u(k)| \le \frac{C}{|k|} \min \left(|k|u,\frac{1}{|k|u}\right).
\ee
For $\bu=(u_1,u_2)$, $\bx=(x_1,x_2)$, consider
$$
h_\bu(\bx):= h_{u_1}(x_1)h_{u_2}(x_2).
$$
We now prove that for some large enough absolute constant $c>0$ any rectangle $R$ of the form $R=(a_1,a_2)\times(v_1,v_2) \subset \T^2:=[0,2\pi]^2$ with area $|R|=c/b_n$ contains at least one point from the set $\cF_{n,\pi}$. Our proof goes by contradiction.
Let $u_1$, $u_2$ be such that 
$u_1u_2=c_0/b_n$. We choose $c_0>0$ later. Take an $R\subset [0,2\pi]^2$ and write it in the form
$R=(x^0_1-u_1,x^0_1+u_1)\times(x^0_2-u_2,x^0_2+u_2)$. 
Assuming that $R$ does not contain any points from $\cF_{n,\pi}$ we get $h_\bu(y^\mu-\bx^0)=0$ for all $\mu=1,\dots,b_n$. Then, clearly
$$
E:= (2\pi)^{-2}\int_{\T^2} h_\bu(\bx-\bx^0)d\bx - \Phi_n(h_\bu(\bx-\bx^0))
$$
\be\label{2.5}
  = (2\pi)^{-2}\int_{\T^2} h_\bu(\bx-\bx^0)d\bx = (2\pi)^{-2}u_1u_2 = \frac{c_0}{(2\pi)^2 b_n}.
\ee
To obtain a contradiction we estimate the above error $E$ of the Fibonacci cubature formula from above. 

For $\bs\in \N_0^d$ -- the set of vectors with nonnegative integer coordinates, define
$$
\rho (\bs) := \{\bk \in \Z^d : [2^{s_j-1}] \le |k_j| < 2^{s_j}, \quad j=1,\dots,d\}
$$
where $[a]$ denotes the integer part of $a$. 

By formulas (\ref{2.2}) and (\ref{2.3}) we obtain
\be\label{2.6}
E \le \sum_{\bk\neq 0} |\hat h_\bu(\bk)|\Phi(\bk) = \sum_{v=1}^\infty \sum_{\|\bs\|_1=v}\sum_{\bk\in \rho(\bs)} |\hat h_\bu(\bk)|\Phi(\bk).
\ee
Lemma \ref{L2.1} implies that if $v\neq 0$ is such that $2^v\le \gamma b_n$ then for
$\bs$ with $\|\bs\|_1=v$ we have $\rho(\bs)\subset \Gamma(\gamma b_n)$ and 
$\Phi(\bk)=0$, $\bk\in\rho(\bs)$. Let $v_0\in \N$ be the smallest number satisfying 
$2^{v_0}>\gamma b_n$. Then we have 
\be\label{2.7}
E \le \sum_{v=v_0}^\infty \sum_{\|\bs\|_1=v}\sum_{\bk\in \rho(\bs)} |\hat h_\bu(\bk)|\Phi(\bk).
\ee
Lemma \ref{L2.1} implies that for $v\ge v_0$ we have
\be\label{2.8}
|\rho(\bs)\cap L(n)| \le C_12^{v-v_0}, \quad \|\bs\|_1=v.
\ee
Relations (\ref{2.7}), (\ref{2.8}), and (\ref{2.4}) imply
\be\label{2.9}
E \le C_2 2^{-v_0}\sum_{v=v_0}^\infty \sum_{\|\bs\|_1=v}   \min\left(2^{s_1}u_1,\frac{1}{2^{s_1}u_1}\right)\min\left(2^{s_2}u_2,\frac{1}{2^{s_2}u_2}\right).
\ee

We now need the following technical lemma.
\begin{Lemma}\label{L2.2} Let $u_1$, $u_2$ be such that 
$u_1u_2\ge 2^{-v}$, $v\in \N$. Then we have
$$
\sigma(v) := \sum_{\|\bs\|_1=v}   \min\left(2^{s_1}u_1,\frac{1}{2^{s_1}u_1}\right)\min\left(2^{s_2}u_2,\frac{1}{2^{s_2}u_2}\right) \le C_3 \frac{\log(2^{v+1} u_1u_2)}{2^v u_1u_2}.
$$
\end{Lemma}
\begin{proof} Our condition $u_1u_2\ge 2^{-v}$ guarantees that we have $2^vu_1u_2 \ge 1$. 
We split $\sigma(v)=\sigma_1(v)+\sigma_2(v)+\sigma_3(v)$ into three sums with summation over $s_1$ from the following three sets
$$
S_1:= \{s_1\ge 0: 2^{s_1} \le 1/u_1\},
$$
$$
S_2:=\{s_1: 1/u_1 <2^{s_1}\le 2^vu_2\},
$$
$$
S_3:=\{s_1\le v: 2^{s_1}> 2^vu_2\}.
$$
We now estimate separately the $\sigma_i(v)$, $i=1,2,3$. Using the inequality $2^vu_1u_2 \ge 1$ mentioned above, we see  that for $s_1\in S_1$ we have
$2^{s_2}u_2\ge 1$ and therefore
\be\label{2.10}
\sigma_1(v) = \sum_{s_1\in S_1} 2^{s_1}u_1\frac{1}{2^{v-s_1}u_2} = 2^{-v}\frac{u_1}{u_2}\sum_{s_1\in S_1} 2^{2s_1}\le \frac{4}{3}(2^vu_1u_2)^{-1}.
\ee
In the same way, replacing the role of $s_1$, $u_1$ by $s_2$ and $u_2$ we obtain
\be\label{2.11}
\sigma_3(v) \le \frac{4}{3}(2^vu_1u_2)^{-1}.
\ee
Finally, for $\sigma_2(v)$ we have
$$
\sigma_2(v) = \sum_{s_1\in S_2} \frac{1}{2^{s_1}u_1}\frac{1}{2^{v-s_1}u_2} = 
(2^vu_1u_2)^{-1}|S_2| 
$$
\be\label{2.12}
\le (2^vu_1u_2)^{-1}(1+\log(2^vu_1u_2)).
\ee
Combining inequalities (\ref{2.10})--(\ref{2.12}) we complete the proof of Lemma \ref{L2.2}.

\end{proof}
We now complete the proof of Theorem \ref{T2.1}. Assume that $u_1u_2=c_0/b_n$, $c_0\ge 2$.
Then the relation $2^{v_0}\asymp b_n$, relation (\ref{2.9}) and Lemma \ref{L2.2} imply
$$
E \le C_4\frac{\log c_0}{c_0 b_n}.
$$
Obviously, this contradicts (\ref{2.5}) for large enough $c_0$. 

Theorem \ref{T2.1} is now proved.

\end{proof}

\section{Technical lemmas}
\label{Tech}

In Section \ref{Fib} we discussed the two-dimensional case. In the next Sections \ref{Fro} and \ref{SD} we discuss the general $d$-dimensional case. There we need a generalization of the two-dimensional Lemma \ref{L2.2}. This section deals with 
such a generalization. It is somewhat technically involved. We begin with some notations, which are used here. For $\bu=(u_1,\dots,u_d)\in \R^d_+$ we denote 
$\bu^d:=(u_1,\dots,u_{d-1})\in \R^{d-1}_+$. It is convenient for us to use the following notation for products
$$
pr(\bu,\nu):= u_1\cdots u_\nu,\quad \nu\le d.
$$
Thus, for instance, $pr(\bu,d)=pr(\bu,d-1)u_d$. 

We are interested in the behavior of special sums
$$
\sigma(v,\bu):= \sum_{\|\bs\|_1=v}\prod_{j=1}^d \min\left(2^{s_j}u_j,\frac{1}{2^{s_j}u_j}\right),\quad v\in\N_0.
$$
Clearly, for $d\ge 3$ we have
\be\label{3.1}
\sigma(v,\bu) = \sum_{s_d=0}^v \min\left(2^{s_d}u_d,\frac{1}{2^{s_d}u_d}\right)\sigma(v-s_d,\bu^d).
\ee

The main result of this section is the following lemma.

\begin{Lemma}\label{L3.1} Let $v\in \N_0$ and $\bu\in \R^d_+$. Then we have 
the following inequalities.

(I) Under condition $2^vpr(\bu,d)\ge 1$ we have
\be\label{3.2}
\sigma(v,\bu) \le C(d) \frac{\left(\log(2^{v+1}pr(\bu,d))\right)^{d-1}}{2^vpr(\bu,d)}.
\ee

(II) Under condition $2^vpr(\bu,d)\le 1$ we have
\be\label{3.3}
\sigma(v,\bu) \le C(d) 2^vpr(\bu,d) \left(\log\frac{2}{2^{v}pr(\bu,d)}\right)^{d-1}.
\ee

\end{Lemma}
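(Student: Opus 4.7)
The plan is to prove both (I) and (II) simultaneously by induction on $d$, because for a generic $\bu$ the reduction to dimension $d-1$ produces terms of both kinds. For the base case $d=1$, $\sigma(v,\bu)=\min(2^vu_1,1/(2^vu_1))$ and both (3.2) and (3.3) (with $(\log\,\cdot\,)^0=1$) are immediate from whether $2^vu_1\le 1$ or $\ge 1$.

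For the inductive step, I would use the recursion (3.1),
$$\sigma(v,\bu)=\sum_{s_d=0}^{v}\min\!\left(2^{s_d}u_d,\tfrac{1}{2^{s_d}u_d}\right)\sigma(v-s_d,\bu^d),$$
and apply the inductive hypothesis to each $\sigma(v-s_d,\bu^d)$, where the dimension is $d-1$ and the parameter is $v-s_d$. Writing $p:=pr(\bu,d-1)$ and $P:=pr(\bu,d)=pu_d$, the induction switches between (I) and (II) at $s_d=\beta:=v+\log_2 p$, while the outer $\min$ switches at $s_d=\alpha:=-\log_2 u_d$. A direct check gives $\beta-\alpha=\log_2(2^vP)$, so in Case (I) $\alpha\le\beta$ and in Case (II) $\beta\le\alpha$.

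In either case I would split $[0,v]$ into three sub-intervals using $\min(\alpha,\beta)$ and $\max(\alpha,\beta)$ (intersected with $[0,v]$), apply the appropriate half of the inductive hypothesis on each, and evaluate by a substitution $r=s_d-\min(\alpha,\beta)$ or $r=\max(\alpha,\beta)-s_d$. Under such a substitution the inner logarithm collapses to $r+1$ or $|\alpha-\beta|+r+1$, and the product of the outer factor with the $(2^{v-s_d}p)^{\pm1}$ from the induction simplifies to a pure prefactor in $P$ times $2^{\pm2r}$. The two "extreme" sub-intervals produce geometric sums bounded by $O((\log(2^{v+1}P))^{d-2}/(2^vP))$ in Case (I) and $O(2^vP\,(\log(2/(2^vP)))^{d-2})$ in Case (II), which are absorbed into the target bound. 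The "middle" sub-interval, of length $|\alpha-\beta|+1$, contains a sum $\sum_{j=1}^{|\alpha-\beta|+1}j^{d-2}\le C_d|\alpha-\beta|^{d-1}$; since $|\alpha-\beta|=|\log_2(2^vP)|$, this is precisely the mechanism that raises the logarithmic power from $d-2$ to $d-1$ and produces the desired main term.

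The main obstacle is the bookkeeping of edge cases, where $\alpha<0$ (i.e.\ $u_d>1$) or $\beta>v$ (i.e.\ $p>1$) forces one of the three sub-intervals to degenerate and $\alpha,\beta$ must be clipped to $[0,v]$; one has to verify that each replacement by $\max(0,\cdot)$ or $\min(v,\cdot)$ preserves the geometric-series structure and the final log count. A secondary subtlety is that both (I) and (II) are genuinely needed at every inductive step, so the two halves cannot be decoupled.
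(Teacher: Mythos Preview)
Your plan is correct and follows essentially the same route as the paper: induction on $d$ via the recursion (3.1), with the sum over $s_d$ split into three ranges at exactly your thresholds $\alpha=-\log_2 u_d$ and $\beta=v+\log_2 pr(\bu,d-1)$, the middle range supplying the extra logarithm and the outer ranges handled as geometric-with-log sums. The only cosmetic differences are that the paper takes $d=2$ as the base case (recycling Lemma~\ref{L2.2}) rather than $d=1$, and it isolates the two outer geometric sums into short auxiliary Lemmas~\ref{L3.2} and~\ref{L3.3}.
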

\begin{proof} Our proof goes by induction on $d$. First, we establish Lemma \ref{L3.1} 
for $d=2$. Inequality (\ref{3.2}) follows directly from Lemma \ref{L2.2}. We now prove
inequality (\ref{3.3}). As in the proof of Lemma \ref{L2.2} we split the sum $\sigma(v,\bu)$ into three sums respectively over the index sets
$$
S'_1:= \{s_1\ge 0: 2^{s_1} \le 2^vu_2\},
$$
$$
S'_2:=\{s_1: 2^vu_2 <2^{s_1} < 1/u_1\}.
$$
Note that our 
condition $2^vpr(\bu,2)=2^vu_1u_2\le 1$ implies $2^vu_2 \le 1/u_1$. 
$$
S'_3:=\{s_1\le v: 2^{s_1}\ge 1/u_1\}.
$$
We now estimate the corresponding $\sigma'_i$, $i=1,2,3$ separately.  For the first sum we have
$$
\sigma'_1 = \sum_{s_1\in S'_1} 2^{s_1}u_1 \frac{1}{2^{v-s_1}u_2} = 2^{-v}\frac{u_1}{u_2}\sum_{s_1\in S'_1}2^{2s_1} 
$$
$$
\le \frac{4}{3}2^{-v}\frac{u_1}{u_2}(2^vu_2)^2 = \frac{4}{3}2^vu_1u_2.
$$
For the second sum we have
$$
\sigma'_2 = \sum_{s_1\in S'_2} 2^{s_1}u_1 2^{v-s_1}u_2 = 2^vu_1u_2|S'_2| \le 2^vu_1u_2 \left(1+\log\frac{1}{2^vu_1u_2}\right).
$$
The third sum $S'_3$ is similar to the sum $S'_1$. In $S'_1$ we have a condition, that can be rewritten as $2^{s_2}u_2\ge 1$ and in $S'_3$ we have the condition $2^{s_1}u_1\ge 1$. Thus, for the third sum we have 
$$
\sigma'_3 \le \frac{4}{3}2^vu_1u_2.
$$
Summing up the above bounds for $\sigma'_i$, $i=1,2,3,$ we obtain (\ref{3.3}) in case 
$d=2$, which completes the proof of Lemma \ref{L3.1} in case $d=2$. 

We now proceed to the induction step from $d-1$ to $d$. Suppose Lemma \ref{L3.1} holds for 
$d-1\ge 2$. We derive from here Lemma \ref{L3.1} for $d$. We begin with the case (I), i.e. assume that inequality $2^vpr(\bu,d)\ge 1$ holds. We use identity (\ref{3.1}) and 
Lemma \ref{L3.1} for $d-1$. We split the sum $\sigma(v,\bu)$ into three sums over 
the following index sets
$$
U_1:= \{s_d\ge 0: 2^{s_d}\le 1/u_d\},
$$
$$
U_2:= \{s_d: 1/u_d < 2^{s_d} < 2^vpr(\bu,d-1)\}.
$$
Note that our assumption $2^vpr(\bu,d)\ge 1$ guarantees $1/u_d \le 2^vpr(\bu,d-1)$.
$$
U_3:=\{s_d\le v: 2^{s_d} \ge 2^vpr(\bu,d-1)\}.
$$
Then, for the first sum we have
$$
\sigma_1(v,\bu)= \sum_{s_d\in U_1} 2^{s_d}u_d\sigma(v-s_d,\bu^d).
$$
For $s_d\in U_1$ we have $2^{s_d}u_d \le 1$, which combined with the condition (I) inequality 
$$
2^{v-s_d}pr(\bu,d-1)2^{s_d}u_d = 2^vpr(\bu,d) \ge 1
$$
implies $2^{v-s_d}pr(\bu,d-1) \ge 1$. Thus, applying inequality (\ref{3.2}) of Lemma \ref{L3.1} for $d-1$ we get (for convenience, here and later we write $\alpha \ll \beta$ 
instead of $\alpha \le C(d)\beta$)
$$
\sigma_1(v,\bu) \ll \sum_{s_d\in U_1} 2^{s_d}u_d \frac{(\log(2^{v+1-s_d}pr(\bu,d-1)))^{d-2}}{2^{v-s_d}pr(\bu,d-1)} 
$$
\be\label{3.5'}
= \frac{u_d}{2^vpr(\bu,d-1)}\sum_{s_d\in U_1} 2^{2s_d} (\log(2^{v+1-s_d}pr(\bu,d-1)))^{d-2}.
\ee
Here we need the following simple technical lemma, which we formulate without proof.

\begin{Lemma}\label{L3.3} Let two numbers $A\ge 1$ and $B\ge A$ be given. Then for $\nu\in\N$  we have
$$
\sum_{k:2^k\le A} 2^{2k}\left(\log\frac{2B}{2^{k}}\right)^\nu \le C(\nu) A^{2}\left(\log\frac{2B}{A}\right)^\nu.
$$
\end{Lemma}
Using Lemma \ref{L3.3} we continue relation (\ref{3.5'})
$$
\ll \frac{u_d}{2^vpr(\bu,d-1)}\left(\frac{1}{u_d}\right)^2\log(2^{v+1}pr(\bu,d))^{d-2}
$$
$$
= 
\frac{1}{2^vpr(\bu,d)} \log(2^{v+1}pr(\bu,d))^{d-2}.
$$

Next, for the second sum we have
$$
\sigma_2(v,\bu) = \sum_{s_d\in U_2} \frac{1}{2^{s_d}u_d}\sigma(v-s_d,\bu^d).
$$
For $s_d\in U_2$ we have $2^{s_d} < 2^vpr(\bu,d-1)$, which implies $2^{v-s_d}pr(\bu,d-1)>1$. Therefore, we continue, using the first inequality of Lemma \ref{L3.1} for $d-1$.
$$
\sigma_2(v,\bu) \ll \sum_{s_d\in U_2} \frac{1}{2^{s_d}u_d}\frac{(\log(2^{v+1-s_d}pr(\bu,d-1))^{d-2}}{2^{v-s_d}pr(\bu,d-1)}
$$
$$
= \frac{1}{2^vpr(\bu,d)} \sum_{s_d\in U_2}  (\log(2^{v+1-s_d}pr(\bu,d-1))^{d-2}
$$
$$
\le \frac{1}{2^vpr(\bu,d)}  (\log(2^{v+1}pr(\bu,d))^{d-2}|U_2| \le \frac{\log(2^{v+1}pr(\bu,d))^{d-1}}{2^vpr(\bu,d)}.
$$

Finally, for the third sum we have
$$
\sigma_3(v,\bu) = \sum_{s_d\in U_3} \frac{1}{2^{s_d}u_d}\sigma(v-s_d,\bu^d).
$$
For $s_d\in U_3$ we have $2^{s_d} \ge 2^vpr(\bu,d-1)$, which is the same as \newline $2^{v-s_d}pr(\bu,d-1) \le 1$. Applying inequality (\ref{3.3}) of Lemma \ref{L3.1} for $d-1$ we obtain
$$
\sigma_3(v,\bu) \ll \sum_{s_d\in U_3} \frac{1}{2^{s_d}u_d}2^{v-s_d}pr(\bu,d-1) \left(\log\frac{2}{2^{v-s_d}pr(\bu,d-1)}\right)^{d-2}
$$
\be\label{3.4}
=\frac{2^vpr(\bu,d-1)}{u_d}\sum_{s_d\in U_3} 2^{-2s_d}\left(\log\frac{2^{s_d+1}}{2^vpr(\bu,d-1)}\right)^{d-2}.
\ee
Here we need the following simple technical lemma, which we formulate without proof.

\begin{Lemma}\label{L3.2} Let two numbers $A\ge 1$ and $0<B\le A$ be given. Then for $\nu\in \N$ we have
$$
\sum_{k:2^k\ge A} 2^{-2k}\left(\log\frac{2^{k+1}}{B}\right)^\nu \le C(\nu) A^{-2}\left(\log\frac{2A}{B}\right)^\nu.
$$
\end{Lemma}

Using Lemma \ref{L3.2} we continue relation (\ref{3.4})
$$
\ll \frac{2^vpr(\bu,d-1)}{u_d}\frac{1}{(2^vpr(\bu,d-1))^2} = \frac{1}{2^vpr(\bu,d)}.
$$ 

Combining the above inequalities for all three sums $\sigma_i(v,\bu)$ we complete the 
proof of Lemma \ref{L3.1} in the first case (I). 

We now proceed to the second case (II). In this case we split the summation over $s_d$ into three index sets:
$$
U'_1:= \{s_d\ge 0: 2^{s_d}\le 2^vpr(\bu,d-1)\},
$$
$$
U'_2:=\{s_d: 2^vpr(\bu,d-1)< 2^{s_d}<1/u_d\},
$$
$$
U'_3:=\{s_d\le v: 2^{s_d}\ge 1/u_d\}.
$$
We now estimate the corresponding sums separately. For $s_d\in U'_1$ we have 
$2^{s_d}u_d \le 2^vpr(\bu,d) \le 1$ by the condition for case (II). Also, for $s_d\in U'_1$ we have $2^{v-s_d}pr(\bu,d-1)\ge 1$. Therefore, by (\ref{3.2}) of Lemma \ref{L3.1} for $d-1$ we get
$$
\sigma'_1(v,\bu) \ll \sum_{s_d\in U'_1}2^{s_d}u_d\frac{\left(\log(2^{v+1-s_d}pr(\bu,d-1))\right)^{d-2}}{2^{v-s_d}pr(\bu,d-1)}
$$
\be\label{3.5}
= \frac{u_d}{2^vpr(\bu,d-1)}\sum_{s_d\in U'_1} 2^{2s_d}\left(\log(2^{v+1-s_d}pr(\bu,d-1))\right)^{d-2}.
\ee
 
Using Lemma \ref{L3.3} we continue relation (\ref{3.5})
$$
\ll \frac{u_d}{2^vpr(\bu,d-1)} (2^vpr(\bu,d-1))^2 = 2^vpr(\bu,d).
$$

For $s_d\in U'_2$ we have $2^{v-s_d}pr(\bu,d-1)<1$. Therefore, by (\ref{3.3}) of Lemma \ref{L3.1} for $d-1$ we obtain
$$
\sigma'_2(v,\bu) \ll \sum_{s_d\in U'_2} 2^{s_d}u_d2^{v-s_d}pr(\bu,d-1)\left(\log\frac{2}{2^{v-s_d}pr(\bu,d-1)}\right)^{d-2}
$$
$$
=2^vpr(\bu,d)\sum_{s_d\in U'_2} \left(\log\frac{2}{2^{v-s_d}pr(\bu,d-1)}\right)^{d-2}
$$
$$
\le 2^vpr(\bu,d)\left(\log\frac{2}{2^{v}pr(\bu,d)}\right)^{d-2}|U'_2| \le 2^vpr(\bu,d)\left(\log\frac{2}{2^{v}pr(\bu,d)}\right)^{d-1}.
$$

Finally, for $s_d\in U'_3$ the case (II) assumption $2^vpr(\bu,d)\le 1$ implies $2^{v-s_d}pr(\bu,d-1)\le 1$. Thus, using inequality (\ref{3.3}) from Lemma \ref{L3.1} for $d-1$ 
we get
$$
\sigma'_3(v,\bu) \ll \sum_{s_d\in U'_3}\frac{1}{2^{s_d}u_d}2^{v-s_d}pr(\bu,d-1)\left(\log\frac{2}{2^{v-s_d}pr(\bu,d-1)}\right)^{d-2}
$$
$$
= \frac{2^{v}pr(\bu,d-1)}{u_d}\sum_{s_d\in U'_3} 2^{-2s_d}\left(\log\frac{2}{2^{v-s_d}pr(\bu,d-1)}\right)^{d-2}.
$$
Using Lemma \ref{L3.2} we continue
$$
\ll  \frac{2^{v}pr(\bu,d-1)}{u_d} \left(\frac{1}{u_d}\right)^{-2}\left(\log\frac{2}{2^{v}pr(\bu,d)}\right)^{d-2} 
$$
$$
= 2^{v}pr(\bu,d) \left(\log\frac{2}{2^{v}pr(\bu,d)}\right)^{d-2}.
$$

Combining the above inequalities for all three sums $\sigma'_i(v,\bu)$ we complete the 
proof of Lemma \ref{L3.1} in the second case (II) and complete the proof of Lemma \ref{L3.1}. 

\end{proof}

\begin{Remark}\label{R3.1} It is easy to check that the above proof of Lemma \ref{L3.1} allows us to obtain the following bound on the constant $C(d) \le C^d$ 
with an absolute constant $C$.
\end{Remark}

\section{The Frolov point sets}
\label{Fro}

In this section we study dispersion of point sets, which are known to be very good 
for numerical integration, -- the {\it Frolov point sets}. We refer the reader for detailed 
presentation of the theory of the Frolov cubature formulas to \cite{TBook}, \cite{T11}, \cite{MUll}, and \cite{DTU}. We begin with a description of the Frolov point sets. The following lemma plays a fundamental role in the
construction of such point sets (see \cite{TBook} for its proof).

\begin{Lemma}\label{L4.1} There exists a matrix $A$ such that the lattice
$L(\mathbf m) = A\mathbf m$
$$
 L(\mathbf m) =
\begin{bmatrix}
L_1(\mathbf m)\\
\vdots\\
L_d(\mathbf m)
\end{bmatrix},
$$
where $\mathbf m$ is a (column)
vector with integer coordinates,
has the following properties

{$1^0$}. $\qquad \left |\prod_{j=1}^d L_j(\mathbf m)\right|\ge 1$
for all $\mathbf m \ne \mathbf 0$;

{$2^0$} each parallelepiped $P$ with volume $|P|$
whose edges are parallel
to the coordinate axes contains no more than $|P| + 1$ lattice
points.
\end{Lemma}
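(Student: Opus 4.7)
The plan is to construct $A$ (and hence the lattice $A\Z^d$) via algebraic number theory, using Galois conjugates of a suitable totally real algebraic integer of degree $d$. Concretely, I would fix a monic irreducible polynomial $p(x)\in\Z[x]$ of degree $d$ whose roots $\xi_1,\dots,\xi_d$ are $d$ distinct real numbers; a standard choice is $p(x) = \prod_{k=1}^{d}(x-(2k-d-1)) - 1$, whose real roots can be located by a sign-change argument between consecutive integer shifts. Take $A$ to be the Vandermonde matrix $(\xi_i^{j-1})_{i,j=1}^d$, possibly multiplied by a positive scalar so that the covolume of $A\Z^d$ is normalized to $1$.

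Property $1^0$ then follows from a standard norm computation. The quantity $\alpha_{\mathbf m} := \sum_{j=1}^d m_j \xi_1^{j-1}$ is an algebraic integer in $\mathbb{Q}(\xi_1)$, and its images under the $d$ real embeddings are precisely $L_1(\mathbf m),\dots,L_d(\mathbf m)$ (up to the overall scalar). Because $p$ is irreducible of degree $d$, the powers $1,\xi_1,\dots,\xi_1^{d-1}$ are $\mathbb{Q}$-linearly independent, so $\alpha_{\mathbf m}\ne 0$ whenever $\mathbf m \ne \mathbf 0$. Hence $\prod_{j=1}^d L_j(\mathbf m)$ equals (up to the scalar) the norm $N_{\mathbb{Q}(\xi_1)/\mathbb{Q}}(\alpha_{\mathbf m})\in\Z\setminus\{0\}$, whose modulus is at least $1$; a suitable choice of the scaling constant gives the bound in $1^0$ with constant $1$ on the right.

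Property $2^0$ is the delicate part. The rough strategy: if $A\mathbf m_1,\dots,A\mathbf m_N$ all lie in the axis-parallel box $P = \prod_k [a_k,a_k+b_k)$, then each nonzero difference $A(\mathbf m_i-\mathbf m_j)$ is a lattice vector whose $k$-th coordinate has modulus strictly less than $b_k$. Applying property $1^0$ to such differences shows that nonzero lattice vectors in the rescaled box $D^{-1}(P-P)$, with $D=\mathrm{diag}(b_1,\dots,b_d)$, satisfy a lower bound on the coordinatewise product depending on $|P|$. A covolume/pigeonhole count on the rescaled lattice $D^{-1}A\Z^d$ (whose covolume is $1/|P|$) against the unit cube then yields a linear-in-$|P|$ cap on $N$, using the admissibility of the lattice with respect to axis-parallel boxes.

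The hard part will be obtaining the sharp constant $1$ in front of $|P|$ rather than some dimensional constant $C(d)$, which any generic Minkowski-type count would give. This requires fully exploiting the Vandermonde structure of $A$ and the algebraic integrality of the lattice coordinates, likely through an induction on $d$ that peels off one coordinate direction at a time and uses the product-of-conjugates lower bound on the remaining $(d-1)$-dimensional slice. For the precise sharpening to $|P|+1$, I would ultimately follow the detailed argument given in \cite{TBook} rather than reproduce the full combinatorial bookkeeping here.
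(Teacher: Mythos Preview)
The paper does not supply a proof of this lemma; it simply refers the reader to \cite{TBook}. Your outline coincides with the classical construction given there: take $A$ to be the Vandermonde matrix built from the $d$ real roots of a suitable irreducible monic integer polynomial, and deduce $1^0$ from the fact that $\prod_j L_j(\mathbf m)$ is the field norm $N_{\mathbb Q(\xi_1)/\mathbb Q}(\alpha_{\mathbf m})$ of a nonzero algebraic integer. (The optional rescaling to covolume $1$ that you mention should be dropped: scaling $A$ by a constant $c<1$ would destroy the constant $1$ in $1^0$, and the lemma as stated imposes no normalization on $\det A$.)

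Where your sketch misses the mark is $2^0$. The Minkowski/pigeonhole route you describe indeed yields only $C(d)|P|$, and the induction on $d$ you propose for sharpening the constant is not the right mechanism. The argument in \cite{TBook} is far more elementary and uses nothing beyond $1^0$: if $A\mathbf m_1,\dots,A\mathbf m_N$ all lie in $P=\prod_j[a_j,a_j+b_j)$, order them by their first coordinate $L_1$ (these are distinct, since $L_1(\mathbf m_i-\mathbf m_k)=0$ would make the product in $1^0$ vanish). For each consecutive pair the difference $\mathbf n_i:=\mathbf m_{i+1}-\mathbf m_i\neq\mathbf 0$ satisfies $|L_j(\mathbf n_i)|<b_j$ for $j\ge 2$, so $1^0$ forces $L_1(\mathbf n_i)>(b_2\cdots b_d)^{-1}$. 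Telescoping,
\[
b_1>L_1(\mathbf m_N)-L_1(\mathbf m_1)=\sum_{i=1}^{N-1}L_1(\mathbf n_i)>\frac{N-1}{b_2\cdots b_d},
\]
whence $N-1<|P|$. This delivers the sharp bound $|P|+1$ in one line, with no combinatorial bookkeeping and no appeal to lattice geometry beyond $1^0$.
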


Let $a > 1$ and $A$ be the matrix from Lemma \ref{L4.1}. We consider the
cubature formula
$$
\Phi(a,A)(f) := \bigl(a^d |\det A|\bigr)^{-1}\sum_{\mathbf m\in\Z^d}f
\left (\frac{(A^{-1})^T\mathbf m}{a}\right)
$$
for $f$ with compact support.   

We call the {\it Frolov point set} the following set associated with the matrix $A$ and parameter $a$
$$
\cF(a,A) := \left\{\left (\frac{(A^{-1})^T\mathbf m}{a}\right)\right\}_{\bm\in\Z^d}\cap [0,1]^d =: \{z^\mu\}_{\mu=1}^N.
$$
 Clearly, the number $N=|\cF(a,A)|$ of points of this
set does not exceed $C(A)a^d$. 

The main result of this section is the following theorem.

\begin{Theorem}\label{T4.1} Let $A$ be a matrix from Lemma \ref{L4.1}. There is a  constant $C(d,A)$, which may only depend on $A$ and $d$, such that for all $a$ we have
\be\label{4.1}
\text{disp}(\cF(a,A)) \le C(A,d)a^{-d}.
\ee
\end{Theorem}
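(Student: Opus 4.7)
\medskip
\noindent\textbf{Proof proposal.} My plan is to mimic the proof of Theorem \ref{T2.1}, with Lemma \ref{L4.1} playing the role of Lemma \ref{L2.1} and the $d$-dimensional Lemma \ref{L3.1} replacing Lemma \ref{L2.2}. Suppose, for contradiction, that some axis-parallel box $R=\prod_{j=1}^d(x_j^0-u_j,x_j^0+u_j)\subset[0,1]^d$ with $pr(\bu,d)=c_0/(2^d a^d)$ contains no point of $\cF(a,A)$, where $c_0=c_0(A,d)$ will be chosen large. Form the tensor-product tent $h_\bu(\bx):=\prod_{j=1}^d h_{u_j}(x_j)$, with $h_u$ the (non-periodic) hat from Section \ref{Fib} supported in $[-u,u]$. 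Since $h_\bu(\cdot-\bx^0)$ is supported in $R$ we have $\Phi(a,A)(h_\bu(\cdot-\bx^0))=0$, and the cubature error
\[
E:=\Phi(a,A)(h_\bu(\cdot-\bx^0))-\int_{\R^d}h_\bu(\bx-\bx^0)\,d\bx
\]
satisfies $|E|=pr(\bu,d)=c_0/(2^d a^d)$.

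To bound $E$ from above I would apply Poisson summation to the lattice $(A^{-1})^T\Z^d/a$, whose dual is $aA\Z^d$, obtaining
\[
E=\sum_{\bn\in\Z^d\setminus\{\mathbf 0\}}\hat h_\bu(aA\bn),
\]
where $\hat h_\bu$ is the continuous Fourier transform. The one-dimensional bound (\ref{2.4}) used coordinate-wise gives
\[
|\hat h_\bu(\bk)|\le C^d\prod_{j=1}^d\frac{1}{|k_j|}\min\!\left(|k_j|u_j,\frac{1}{|k_j|u_j}\right),
\]
and property $1^0$ of Lemma \ref{L4.1} supplies $\prod_j|aL_j(\bn)|\ge a^d$ for every $\bn\ne\mathbf 0$. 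Substituting $\bk=aA\bn$ I may factor out $a^{-d}$ and reduce the task to bounding
\[
S:=\sum_{\bn\ne\mathbf 0}\prod_{j=1}^d\min\!\left(a|L_j(\bn)|u_j,\frac{1}{a|L_j(\bn)|u_j}\right).
\]

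The final step is the dyadic decomposition of $S$ according to the block $\rho(\bs)$ containing $\bk=aA\bn$. Property $1^0$ forces $\prod_j|k_j|\ge a^d$, so only $\bs$ with $\|\bs\|_1\ge v_0$, $2^{v_0}\asymp a^d$, contribute. Property $2^0$ of Lemma \ref{L4.1}, applied to each sign-octant of $\rho(\bs)$ after rescaling by $1/a$, supplies the analogue of (\ref{2.8}),
\[
|\{\bn\ne\mathbf 0:aA\bn\in\rho(\bs)\}|\le C(A,d)\,2^{\|\bs\|_1-v_0}.
\]
Since $|k_j|\asymp 2^{s_j}$ on $\rho(\bs)$, I arrive at $S\ll 2^{-v_0}\sum_{v\ge v_0}2^v\sigma(v,\bu)$ in the notation of Section \ref{Tech}. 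The choice $pr(\bu,d)=c_0/(2^d a^d)$ and $2^v\ge 2^{v_0}\asymp a^d$ ensure $2^v pr(\bu,d)\ge 1$ for every $v\ge v_0$ once $c_0$ is large, so case (I) of Lemma \ref{L3.1} applies to each term and a geometric summation in $v$ yields
\[
|E|\le C(A,d)\frac{(\log c_0)^{d-1}}{c_0 a^d},
\]
which contradicts $|E|=c_0/(2^d a^d)$ once $c_0$ is large enough in terms of $A$ and $d$; this proves (\ref{4.1}). The main obstacle I expect is the bookkeeping in this dyadic step: justifying the count on $\rho(\bs)$ from property $2^0$ and handling indices $s_j$ that may be negative (because $|L_j(\bn)|$ need not exceed $1$). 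Once these are in place the summation is routine thanks to Lemma \ref{L3.1}.
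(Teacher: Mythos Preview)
Your strategy is exactly the paper's: hat function, contradiction, Poisson summation for the Frolov lattice, dyadic blocks $\rho(\bs)$, lattice count from property $2^0$, then Lemma \ref{L3.1}. There is, however, one genuine gap in the bookkeeping that makes the argument break as written.

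The step ``factor out $a^{-d}$'' is where it goes wrong. From the coordinate-wise bound you get
\[
|\hat h_\bu(\bk)|\le C^d\prod_{j=1}^d\frac{1}{|k_j|}\min\!\Big(|k_j|u_j,\frac{1}{|k_j|u_j}\Big),
\]
and you replace $\prod_j 1/|k_j|$ by the uniform bound $a^{-d}$ coming from property $1^0$. That is a valid inequality, but it throws away the decay in $v$: on a block $\rho(\bs)$ with $\|\bs\|_1=v$ one actually has $\prod_j 1/|k_j|\asymp 2^{-v}$, not merely $\le a^{-d}\asymp 2^{-v_0}$. After your factorization you are left with
\[
S\ll 2^{-v_0}\sum_{v\ge v_0}2^{v}\sigma(v,\bu),
\]
and by case (I) of Lemma \ref{L3.1} the summand behaves like $2^{v}\sigma(v,\bu)\asymp(\log(2^{v}pr(\bu,d)))^{d-1}/pr(\bu,d)$, which does \emph{not} decay in $v$; the series diverges and no ``geometric summation'' is possible. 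The fix is simply not to factor out $a^{-d}$ globally: keep $\prod_j 1/|k_j|\le C\,2^{-v}$ on each $\rho(\bs)$, so that the $2^{v}$ from the lattice count cancels against this $2^{-v}$ and you obtain, exactly as in the paper's (\ref{4.7}),
\[
|E|\le C\,2^{-v_0}\sum_{v\ge v_0}\sigma(v,\bu)\le C(d,A)\,2^{-v_0}\frac{(\log c_0)^{d-1}}{c_0},
\]
the last step using Lemma \ref{L3.1}(I) together with an analogue of Lemma \ref{L3.2}. This then contradicts $|E|=c_0a^{-d}$ for large $c_0$.

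Your worry about ``negative $s_j$'' is not an issue: with the convention $\rho(\bs)$, $\bs\in\N_0^d$, the case $s_j=0$ already covers $|k_j|<1$, and the pointwise bound $\min(u_j,1/(k_j^2u_j))\le C\,2^{-s_j}\min(2^{s_j}u_j,1/(2^{s_j}u_j))$ holds there as well (it reduces to $u_j\le u_j$). The lattice count $|\{\bn:aA\bn\in\rho(\bs)\}|\le C(A,d)2^{v-v_0}$ follows directly from property $2^0$ applied to each of the $2^d$ sign-octants after scaling by $1/a$, as you indicate.
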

\begin{proof} The idea of the proof of this theorem is the same as of the proof of Theorem \ref{T2.1}. 
For $\bu=(u_1,\dots,u_d)\in\R^d_+$, $\bx=(x_1,\dots,x_d)$, consider
\be\label{4.1'}
h_\bu(\bx):=\prod_{j=1}^d h_{u_j}(x_j),
\ee
where $h_u(t) = |1-t/u|$ on $[-u,u]$ and equal to $0$ for $|t|\ge u$.
We now prove that for some large enough constant $C(A,d)>0$ any $d$-dimensional box $B$ of the form $B=\prod_{j=1}^d (a_1^j,a_2^j) \subset [0,1]^d$ with area $|B|=C(A,d)a^{-d}$ contains at least one point from the set $\cF(a,A)$. Our proof goes by contradiction.
Let $\bu$ be such that 
$pr(\bu,d)=c_0a^{-d}$. We choose $c_0>0$ later. Take a $B\subset [0,1]^d$ and write it in the form
$B=\prod_{j=1}^d(x^0_j-u_j,x^0_j+u_j)$. 
Assuming that $B$ does not contain any points from $\cF(a,A)$ we get $h_\bu(z^\mu-\bx^0)=0$ for all $\mu=1,\dots,N$. Then, clearly
$$
e:= \int_{[0,1]^d} h_\bu(\bx-\bx^0)d\bx - \Phi(a,A)(h_\bu(\bx-\bx^0))
$$
\be\label{4.2}
  =  \int_{[0,1]^d} h_\bu(\bx-\bx^0)d\bx = pr(\bu,d) = c_0 a^{-d}.
\ee
To obtain a contradiction we estimate the above error $e$ of the Frolov cubature formula from above. Denote for $f\in L_1 (\R^d)$
$$
\hat f(\mathbf y) := \int_{\R^d} f(\mathbf x)e^{-2\pi i(\mathbf y,\mathbf x)}d\mathbf x.
$$
For a function $f$ with finite support and absolutely convergent 
series \newline $\sum_{\mathbf m\in\Z^d}\hat f(aA\mathbf m)$ we have for the error of the Frolov  cubature formula (see \cite{TBook})
\be\label{4.3}
 \Phi(a,A)(f) -\hat f(\mathbf 0) =\sum_{\mathbf m\ne\mathbf 0}
\hat f(aA\mathbf m).
\ee
The proof of this formula is based on the Poisson formula, which we formulate in the
form convenient for us (see \cite{TBook} for the proof).  
\begin{Lemma}\label{L4.2} Let $f(\mathbf x)$ be continuous and have
compact support and the series $\sum_{\mathbf k\in\Z^d}\hat f(\mathbf k)$
converges. Then
$$
\sum_{\mathbf k\in\Z^d}\hat f(\mathbf k) =
\sum_{\bn\in\Z^d} f(\bn) .
$$
\end{Lemma}

By (\ref{4.3}) we obtain
\be\label{4.4}
e \le \sum_{\bm\neq 0} |\hat h_\bu(aA\bm)| = \sum_{v=1}^\infty \sum_{\|\bs\|_1=v}\sum_{\bm: aA\bm \in \rho(\bs)} |\hat h_\bu(aA\bm)|.
\ee
Lemma \ref{L4.1} implies that if $v\neq 0$ is such that $2^v< a^d$ then for
$\bs$ with $\|\bs\|_1=v$ there is no $\bm$ such that  $aA\bm\in\rho(\bs)$. Let $v_0\in \N$ be the smallest number satisfying 
$2^{v_0}\ge a^d$. Then we have 
\be\label{4.5}
e \le \sum_{v=v_0}^\infty \sum_{\|\bs\|_1=v}\sum_{\bm: aA\bm\in \rho(\bs)} |\hat h_\bu(aA\bm)|.
\ee
Lemma \ref{L4.1} implies that for $v\ge v_0$ we have
\be\label{4.6}
|\rho(\bs)\cap \{aA\bm\}_{\bm\in \Z^d}| \le C_12^{v-v_0}, \quad \|\bs\|_1=v.
\ee
Relations (\ref{4.5}), (\ref{4.6}), and (\ref{2.4}) imply
\be\label{4.7}
e \le C_2 2^{-v_0}\sum_{v=v_0}^\infty \sum_{\|\bs\|_1=v}   \prod_{j=1}^d\min\left(2^{s_j}u_j,\frac{1}{2^{s_j}u_j}\right).
\ee
We now assume that $c_0\ge 2$ and $c_0a^{-d}\ge 2^{-v_0}$. 
Using inequality (\ref{3.2}) of Lemma \ref{L3.1} and an analog of Lemma \ref{L3.2} we obtain from here
$$
e \le C(d,A)2^{-v_0} \frac{(\log c_0)^{d-1}}{c_0},
$$
which is in contradiction with (\ref{4.2}) for large enough $c_0$.

\end{proof}

\begin{Remark}\label{R4.1} The above proof of Theorem \ref{T4.1} and Remark \ref{R3.1} allow us to obtain the following bound on the constant $C(A,d) \le d^{c(A)d}$
with $c(A)$ depending only on $A$. 
\end{Remark}
\begin{Remark}\label{R4.2} Right after the first version of this paper, which contained 
Theorem \ref{T4.1}, has been published in arXiv Mario Ullrich informed me that he 
has an unpublished note, where he obtained a bound similar to (\ref{4.1}). His argument 
is based on different ideas. It certainly does not apply to the study of the smooth fixed volume discrepancy (see Section \ref{SD} below). 
\end{Remark}

\section{A remark on smooth discrepancy}
\label{SD}

We begin with a classical definition of discrepancy ("star discrepancy", $L_\infty$-discrepancy) of a point set $T:=\Xi_m := \{\xi^\mu\}_{\mu=1}^m\subset [0,1)^d$. 
Introduce a class of special $d$-variate characteristic functions
$$
\chi^d := \{\chi_{[\mathbf 0,\bb)}(\bx):=\prod_{j=1}^d \chi_{[0,b_j)}(x_j),\quad b_j\in [0,1),\quad j=1,\dots,d\}
$$
where $\chi_{[a,b)}(x)$ is a univariate characteristic function of the interval $[a,b)$. 
The classical definition of discrepancy of a set $T$ of points $\{\xi^1,\dots,\xi^m\}\subset [0,1)^d$ is as follows
$$
D(T,m,d)_\infty := \max_{\bb\in [0,1)^d}\left|\prod_{j=1}^db_j -\frac{1}{m}\sum_{\mu=1}^m \chi_{[\mathbf 0,\bb)}(\xi^\mu)\right|.
$$
It is equivalent within multiplicative constants, which may only depend on $d$, to the following definition
\be\label{d4.1}
D^1(T):=  \sup_{B\in\cB}\left|vol(B)-\frac{1}{m}\sum_{\mu=1}^m \chi_B(\xi^\mu)\right|,
\ee
where for $B=[\ba,\bb)\in \cB$ we denote $\chi_B(\bx):= \prod_{j=1}^d \chi_{[a_j,b_j)}(x_j)$. We use here definition (\ref{d4.1}) because it is more in a spirit of the definition of dispersion. Moreover, we consider the following optimized version of $D^1(T)$
\be\label{d4.1'}
D^{1,o}(T):= \inf_{w_1,\dots,w_m} \sup_{B\in\cB}\left|vol(B)- \sum_{\mu=1}^m w_\mu \chi_B(\xi^\mu)\right|.
\ee

We now modify definitions (\ref{d4.1}) and (\ref{d4.1'}), replacing the characteristic function $\chi_B$ by a smoother hat function $h_B$. Let $B\in\cB$ be written in the form
$$
B=\prod_{j=1}^d [x^0_j -u_j,x^0_j+u_j).
$$
Then we define
$$
h_B(\bx) := pr(\bu,d)h_\bu(\bx-\bx^0),
$$
where $h_\bu(\bx)$ is defined in (\ref{4.1'}).

The $2$-smooth discrepancy is now defined as
\be\label{d4.2}
D^2(T):=  \sup_{B\in\cB}\left|\int h_B(\bx)d\bx-\frac{1}{m}\sum_{\mu=1}^m h_B(\xi^\mu)\right|
\ee
and its optimized version as 
\be\label{d4.2'}
D^{2,o}(T):=  \inf_{w_1,\dots,w_m}\sup_{B\in\cB}\left|\int h_B(\bx)d\bx- \sum_{\mu=1}^m w_\mu h_B(\xi^\mu)\right|.
\ee
Note that the known concept of $r$-discrepancy with $r=2$ (see, for instance, \cite{TBook} and \cite{T11}) is close to the above concepts of $2$-smooth discrepancy. 

Along with $D^2(T)$ and $D^{2,o}(T)$ we consider a more refined quantity -- {\it $2$-smooth fixed volume discrepancy} -- defined as follows
\be\label{d4.3}
D^2(T,V):=  \sup_{B\in\cB:vol(B)=V}\left|\int h_B(\bx)d\bx-\frac{1}{m}\sum_{\mu=1}^m h_B(\xi^\mu)\right|;
\ee
\be\label{d4.3'}
D^{2,o}(T,V):=  \inf_{w_1,\dots,w_m}\sup_{B\in\cB:vol(B)=V}\left|\int h_B(\bx)d\bx- \sum_{\mu=1}^m w_\mu h_B(\xi^\mu)\right|.
\ee
Clearly,
$$
D^2(T) = \sup_{V\in(0,1]} D^2(T,V).
$$

{\bf The Frolov point sets.}
The main result of this section is the following Theorem \ref{dT4.1} on the Frolov point set $T=\cF(a,A)$.

\begin{Theorem}\label{dT4.1} There exists a constant $c(d,A)>0$ such that for any $V\ge V_0:= c(d,A)a^{-d}$ we have
\be\label{d5.7}
D^{2,o}(\cF(a,A),V) \le C(d,A)a^{-2d} (\log(2V/V_0))^{d-1}.
\ee
\end{Theorem}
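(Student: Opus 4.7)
The plan is to use the equal Frolov weights $w_\mu = (a^d|\det A|)^{-1}$, so that $\sum_\mu w_\mu h_B(z^\mu) = \Phi(a,A)(h_B)$, and then estimate this Frolov cubature error exactly as in Theorem \ref{T4.1} but keeping the dependence on $V$ explicit. Writing $B = \prod_{j=1}^d [x_j^0 - u_j, x_j^0 + u_j)\in\cB$, so that $V = 2^d\,pr(\bu,d)$ and $h_B(\bx) = pr(\bu,d)\,h_\bu(\bx - \bx^0)$, one has $|\hat h_B(\by)| = pr(\bu,d)\,|\hat h_\bu(\by)|$, and the Poisson-type identity (\ref{4.3}) applied to $h_B$ gives
$$\left|\int h_B(\bx)\,d\bx - \Phi(a,A)(h_B)\right| \le pr(\bu,d)\sum_{\bm\neq\mathbf{0}} |\hat h_\bu(aA\bm)|.$$

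The next step is a dyadic decomposition of $\Z^d\setminus\{\mathbf{0}\}$ into the shells $\rho(\bs)$. The coordinatewise bound (\ref{2.4}) yields $|\hat h_\bu(\bk)| \le C^d\,2^{-\|\bs\|_1}\prod_{j=1}^d \min(2^{s_j}u_j,\,(2^{s_j}u_j)^{-1})$ for $\bk\in\rho(\bs)$. Choosing $v_0\in\N$ with $2^{v_0}\asymp a^d$, property $1^0$ of Lemma \ref{L4.1} rules out shells with $\|\bs\|_1 < v_0$, while property $2^0$ gives $|\rho(\bs)\cap aA\Z^d|\le C_1 2^{v-v_0}$ for $\|\bs\|_1 = v \ge v_0$. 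Summing shell by shell then produces
$$pr(\bu,d)\sum_{\bm\neq\mathbf{0}} |\hat h_\bu(aA\bm)| \le C(d,A)\,pr(\bu,d)\,2^{-v_0} \sum_{v\ge v_0} \sigma(v,\bu).$$

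The third step is to invoke Lemma \ref{L3.1}. I will choose $V_0 = c(d,A)\,a^{-d}$ with $c(d,A)$ large enough that the hypothesis $V\ge V_0$ forces $2^{v_0}\,pr(\bu,d)\ge 1$; then every $v\ge v_0$ lies in regime (I) of Lemma \ref{L3.1}, and with $y := 2^{v_0}\,pr(\bu,d)\asymp V/V_0$ the elementary estimate $\sum_{w\ge 0} 2^{-w}(\log(2^{w+1}y))^{d-1} \le C(d)(\log(2y))^{d-1}$ yields
$$\sum_{v\ge v_0}\sigma(v,\bu) \le \frac{C(d)}{2^{v_0}\,pr(\bu,d)}(\log(2V/V_0))^{d-1}.$$
Substituting back, the factor $pr(\bu,d)$ cancels and the two $2^{-v_0}$ factors combine into $2^{-2v_0}\asymp a^{-2d}$, giving the claimed bound $C(d,A)\,a^{-2d}(\log(2V/V_0))^{d-1}$, uniformly over $B\in\cB$ with $vol(B) = V$.

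The main obstacle is less in the Fourier-analytic chain, which repeats Theorem \ref{T4.1} almost verbatim, than in pinning down the threshold $V_0$: one must ensure that $V\ge V_0$ simultaneously places \emph{every} shell $v\ge v_0$ in case (I) of Lemma \ref{L3.1} and that this works uniformly over all admissible shapes of $B$. Since $pr(\bu,d) = V/2^d$ is determined by $V$ alone, a single choice of $c(d,A)$ with $2^{v_0}V_0/2^d \ge 1$ handles all shapes at once; the remaining bookkeeping absorbs $|\det A|$, the constant $C_1$ from property $2^0$ of Lemma \ref{L4.1}, and the $C^d$ from (\ref{2.4}) into a single constant $C(d,A)$.
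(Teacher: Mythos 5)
Your proposal is correct and follows essentially the same route as the paper: fix the Frolov weights (so you in fact prove the stronger statement that the paper records as Theorem \ref{dT4.2}), apply the Frolov error identity (\ref{4.3}), decompose dyadically, use properties $1^0$ and $2^0$ of Lemma \ref{L4.1} together with (\ref{2.4}), invoke case (I) of Lemma \ref{L3.1} on each shell $v\ge v_0$, and sum the resulting geometric-with-log tail. The choice $V_0=2^{d-v_0}$ (equivalently $2^{v_0}V_0/2^d\ge 1$) that you single out to guarantee case (I) uniformly is exactly the normalization the paper makes.
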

In the definition (\ref{d4.3'}) of the quantity $D^{2,o}(T,V)$ we optimize over weights 
$w_1,\dots,w_m$, when $V$ is fixed. Therefore, the optimal weights may depend on
parameter $V$. We prove a somewhat stronger version of Theorem \ref{dT4.1}
where the weights, which provide the bound  (\ref{d5.7}), do not depend on $V$. 
We formulate it as a theorem.

\begin{Theorem}\label{dT4.2} There exists a constant $c(d,A)>0$ such that for any $V\ge V_0:= c(d,A)a^{-d}$ we have for all $B\in\cB$, $vol(B)=V$,
\be\label{d5.8}
|\Phi(a,A)(h_B) - \hat h_B(\mathbf 0)| \le C(d,A)a^{-2d} (\log(2V/V_0))^{d-1}.
\ee
\end{Theorem}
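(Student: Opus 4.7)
The plan is to closely follow the scheme of the proof of Theorem \ref{T4.1}, but instead of arguing by contradiction through (\ref{4.2}), to estimate the Frolov error for $h_B$ directly and to track the logarithmic factor coming from Lemma \ref{L3.1}. Write $B=\prod_{j=1}^d[x^0_j-u_j,x^0_j+u_j)$, so $V=\mathrm{vol}(B)=2^d\,pr(\bu,d)$ and $h_B(\bx)=pr(\bu,d)h_\bu(\bx-\bx^0)$ with $h_\bu$ as in (\ref{4.1'}). A translation only changes $\hat h_B$ by a phase factor, so
$$
|\hat h_B(\by)|=pr(\bu,d)\,|\hat h_\bu(\by)|=pr(\bu,d)\prod_{j=1}^d|\hat h_{u_j}(y_j)|,
$$
and the one–dimensional estimate that underlies (\ref{2.4}) gives (in the non-periodic Fourier-transform normalization used in Section \ref{Fro})
$|\hat h_{u}(y)|\le C|y|^{-1}\min(|y|u,(|y|u)^{-1})$ for $y\neq 0$.

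Apply the Frolov error formula (\ref{4.3}) to $f=h_B$, dyadically decompose over blocks $\rho(\bs)$ exactly as in (\ref{4.4})--(\ref{4.5}), and use Lemma \ref{L4.1} to see that only $v\ge v_0$ with $2^{v_0}\ge a^d$ contributes, together with the lattice count (\ref{4.6}). On each block $\rho(\bs)$ the bound on $|\hat h_\bu(aA\bm)|$ contributes a factor $\prod_j 2^{-s_j}=2^{-v}$ in addition to the $\min$-products; this gives, in exact parallel with (\ref{4.7}),
$$
|\Phi(a,A)(h_B)-\hat h_B(\mathbf 0)|
\le C\,pr(\bu,d)\,2^{-v_0}\sum_{v=v_0}^{\infty}\sigma(v,\bu),
$$
where $\sigma(v,\bu)$ is the sum studied in Section \ref{Tech}.

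Now choose $c(d,A)$ so that $V_0=c(d,A)a^{-d}$ forces $2^{v_0}pr(\bu,d)\ge 1$ whenever $V\ge V_0$; since $2^{v_0}\asymp a^d$ and $pr(\bu,d)=V/2^d$, this only requires $c(d,A)$ to absorb the constants from Lemma \ref{L4.1}. Then for every $v\ge v_0$ we have $2^v pr(\bu,d)\ge 1$, so case (I) of Lemma \ref{L3.1} applies and yields
$$
\sigma(v,\bu)\le C(d)\frac{\bigl(\log(2^{v+1}pr(\bu,d))\bigr)^{d-1}}{2^v pr(\bu,d)}.
$$
Substituting this bound, the factor $pr(\bu,d)$ cancels and we are left with a geometric-type sum
$$
|\Phi(a,A)(h_B)-\hat h_B(\mathbf 0)|
\le C(d,A)\,2^{-v_0}\sum_{v=v_0}^{\infty}2^{-v}\bigl(\log(2^{v+1}pr(\bu,d))\bigr)^{d-1}.
$$
A standard computation (dominated by $v=v_0$, of the same flavor as Lemma \ref{L3.2}) bounds the last sum by $C(d)\,2^{-v_0}\bigl(\log(2^{v_0+1}pr(\bu,d))\bigr)^{d-1}$. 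Since $2^{v_0}\asymp a^d$, we have $2^{-2v_0}\asymp a^{-2d}$ and $2^{v_0+1}pr(\bu,d)\asymp 2\,V/V_0$, which gives exactly the bound (\ref{d5.8}).

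The main obstacle is bookkeeping rather than a new idea: the condition $2^v pr(\bu,d)\ge 1$ for all $v\ge v_0$ in the sum must be arranged by the right choice of $V_0$, and one must verify that the inner logarithm evaluated at the base point $v=v_0$ is indeed comparable to $\log(2V/V_0)$ so that the final bound comes out clean. Note also that the resulting weights $w_\mu=(a^d|\det A|)^{-1}$ depend only on the cubature formula $\Phi(a,A)$ and not on $V$ or on $B$, which is what yields the stronger conclusion of Theorem \ref{dT4.2} and implies Theorem \ref{dT4.1} upon taking the infimum over weights.
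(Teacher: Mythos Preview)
Your proposal is correct and follows essentially the same route as the paper's proof: apply the Frolov error formula (\ref{4.3}) and the dyadic decomposition (\ref{4.4})--(\ref{4.5}), use the lattice count (\ref{4.6}) and the Fourier bound (\ref{2.4}) to reduce to $pr(\bu,d)\,2^{-v_0}\sum_{v\ge v_0}\sigma(v,\bu)$, invoke case (I) of Lemma \ref{L3.1} (valid once $c(d,A)$ is chosen so that $V\ge V_0$ forces $2^{v_0}pr(\bu,d)\ge 1$), and then sum the resulting geometric-with-log series via an analog of Lemma \ref{L3.2}. The paper does exactly this, with the specific normalization $V_0=2^{d-v_0}$ so that $pr(\bu,d)\ge 2^{-v_0}$ and $2^{v_0+1}pr(\bu,d)=2V/V_0$.
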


\begin{proof} By (\ref{4.5}) we have for the error (with $M:=a^d|\det A|$)
$$
\delta_B := \left|\int h_B(\bx)d\bx-\frac{1}{M}\sum_{\mu=1}^N h_B(z^\mu)\right| \le \sum_{v=v_0}^\infty \sum_{\|\bs\|_1=v}\sum_{\bm: aA\bm\in \rho(\bs)} |\hat h_B(aA\bm)|.
$$
Using (\ref{4.6}) we obtain by (\ref{2.4})
$$
\delta_B \ll \sum_{v=v_0}^\infty \sum_{\|\bs\|_1=v} 2^{v-v_0}pr(\bu,d)2^{-v}\prod_{j=1}^d\min\left(2^{s_j}u_j,\frac{1}{2^{s_j}u_j}\right).
$$
We now assume that the constant $c(d,A)$ is such that  $V_0= 2^{d-v_0}$. Then for $B\in\cB$ such that $vol(B)\ge V_0$ we have $pr(\bu,d)\ge 2^{-v_0}$. 
Using inequality (\ref{3.2}) of Lemma \ref{L3.1} and an analog of Lemma \ref{L3.2} we obtain from here
$$
 \delta_B \ll 2^{-v_0}\sum_{v=v_0}^\infty 2^{-v}\left(\log\left(2^{v+1}pr(\bu,d)\right)\right)^{d-1} 
 $$
 $$
 \ll 2^{-2v_0}\left(\log\left(2V/V_0\right)\right)^{d-1} \le a^{-2d}\left(\log\left(2V/V_0\right)\right)^{d-1}.
$$
 
\end{proof}

We now make comments on the relation between discrepancy and dispersion. It is obvious from (\ref{d4.1}) that
\be\label{d5.9}
\text{disp}(T) \le D^1(T).
\ee
The best known upper bounds for discrepancy $D^1(T)$ for sets of cardinality $m$ are of the form $D^1(T) \ll m^{-1}(\log m)^{d-1}$. Also, the classical result of Roth \cite{Ro} gives the lower bound $D^1(T) \gg m^{-1}(\log m)^{(d-1)/2}$. There are very interesting improvements of the above lower bound (see \cite{Sch1}, \cite{BL}, \cite{BLV}), which we 
do not discuss here. Therefore, inequality (\ref{d5.9}) can give us the bound 
disp$(T) \ll m^{-1}(\log m)^{d-1}$ and, for sure, we cannot get the bound disp$(T) \ll m^{-1}$ on this way.

Relations 
\be\label{d5.10}
vol(B) = 2^dpr(\bu,d) \quad \text{and}\quad \int h_B(\bx)d\bx = pr(\bu,d)^2
\ee
imply that
\be\label{d5.11}
\text{disp}(T) \le 2^d (D^2(T))^{1/2}.
\ee
This inequality is better than (\ref{d5.9}) but still cannot give us the desired bound disp$(T) \ll m^{-1}$. Thus, the step from discrepancy to smooth discrepancy does not solve the problem. It turns out that the critical step here is to the smooth fixed volume discrepancy. It is clear that 
\be\label{d5.12}
\text{disp}(T)=:V \le 2^d (D^{2,o}(T,V))^{1/2}.
\ee
Inequality (\ref{d5.12}) applied to $T=\cF(a,A)$ combined with Theorem \ref{dT4.1} 
gives for $V:=\text{disp}(T)$
$$
 \text{either}\quad V\le V_0\quad \text{or}\quad V \ll (\log(2V/V_0))^{(d-1)/2} V_0,
$$
which implies
$$
\text{disp}(\cF(a,A)) \ll V_0 \ll a^{-d} \asymp |\cF(a,A)|^{-1}.
$$

{\bf The Fibonacci point sets.}
We have discussed above new concepts of discrepancy and their applications for the
upper bounds for dispersion, in particular, for the Frolov point sets. The crucial role in 
the proof of Theorem \ref{dT4.2} is played by Lemma \ref{L3.1}. In the same way, using Lemma \ref{L2.2} instead of Lemma \ref{L3.1} we can prove the following version of Theorem \ref{dT4.2} for the Fibonacci point sets. 

\begin{Theorem}\label{dT4.3} Let $d=2$. There exists an absolute constant $c>0$ such that for any $V\ge V_0:= c/b_n$ we have for all $B\in\cB$, $vol(B)=V$
\be\label{d5.13}
\left|b_n^{-1}\sum_{\mu=1}^{b_n} h_B(\mu/b_n,\{\mu b_{n-1}/b_n\}) - \hat h_B(\mathbf 0)\right| \le C\log(2V/V_0)/b_n^2.
\ee
\end{Theorem}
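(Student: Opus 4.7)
The plan is to mimic the proof of Theorem \ref{dT4.2} for the Frolov case but in the two-dimensional Fibonacci setting, replacing the use of Lemma \ref{L3.1} by Lemma \ref{L2.2} and the use of Lemma \ref{L4.1} by Lemma \ref{L2.1}. Write $B = \prod_{j=1}^2 [x^0_j - u_j, x^0_j + u_j)$ so that $h_B(\bx) = pr(\bu,2)\, h_\bu(\bx-\bx^0)$, $vol(B) = 4\, pr(\bu,2) = V$, and $\hat h_B(\bk) = pr(\bu,2)\, \hat h_\bu(\bk)\, e^{-2\pi i(\bk,\bx^0)}$. Since $\Phi(\bk)$ from (\ref{2.3}) is the indicator of the lattice $L(n)$, formula (\ref{2.2}) yields
$$
\delta_B := \left|b_n^{-1}\sum_\mu h_B(\mathbf y^\mu) - \hat h_B(\mathbf 0)\right| \le pr(\bu,2)\sum_{\bk\in L(n)'} |\hat h_\bu(\bk)|.
$$

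Next, decompose the lattice sum dyadically into blocks $\rho(\bs)$. By Lemma \ref{L2.1} all nontrivial points of $L(n)$ lie outside $\Gamma(\gamma b_n)$, so only $\bs$ with $\|\bs\|_1 = v \ge v_0$ contribute, where $v_0$ is the smallest integer with $2^{v_0} > \gamma b_n$ (so $2^{v_0} \asymp b_n$). Combine the counting bound (\ref{2.8}), $|\rho(\bs)\cap L(n)| \le C_1 2^{v-v_0}$, with the Fourier decay (\ref{2.4}), which gives $|\hat h_\bu(\bk)| \le C\cdot 2^{-v}\prod_j \min(2^{s_j}u_j, 1/(2^{s_j}u_j))$ for $\bk \in \rho(\bs)$. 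Summing over $\bk \in \rho(\bs)\cap L(n)$ and then over $\|\bs\|_1 = v$:
$$
\sum_{\bk\in L(n)'} |\hat h_\bu(\bk)| \le C_2\, 2^{-v_0} \sum_{v\ge v_0} \sigma(v,\bu),
$$
where $\sigma(v,\bu)$ is exactly the quantity estimated by Lemma \ref{L2.2}.

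Now fix $V_0 := c/b_n$ with $c$ large enough that $2^{v_0} pr(\bu,2) \ge 1$ whenever $pr(\bu,2) = V/4 \ge V_0/4$; this ensures the hypothesis $2^v u_1 u_2 \ge 1$ of Lemma \ref{L2.2} is satisfied for every $v \ge v_0$. Lemma \ref{L2.2} then yields $\sigma(v,\bu) \le C_3 \log(2^{v+1} u_1 u_2)/(2^v u_1 u_2)$. Setting $\alpha := 2^{v_0} u_1 u_2 \asymp V/V_0 \ge 1$ and parametrizing $v = v_0 + k$, the sum $\sum_{v\ge v_0}\sigma(v,\bu)$ becomes a geometric series with an extra linear-in-$k$ log factor, so a routine calculation gives
$$
\sum_{v\ge v_0} \sigma(v,\bu) \ll \frac{\log(2\alpha)}{\alpha} = \frac{\log(2V/V_0)}{2^{v_0}\, u_1 u_2}.
$$

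Putting everything together and using $2^{v_0} \asymp b_n$ and $pr(\bu,2) = u_1 u_2$,
$$
\delta_B \ll pr(\bu,2)\cdot 2^{-v_0}\cdot \frac{\log(2V/V_0)}{2^{v_0}\, u_1 u_2} = \frac{\log(2V/V_0)}{2^{2v_0}} \ll \frac{\log(2V/V_0)}{b_n^2},
$$
which is the desired inequality (\ref{d5.13}). The only subtle point is the calibration of the constant $c$ in $V_0 = c/b_n$ so that the hypothesis of Lemma \ref{L2.2} is guaranteed for the full range $v \ge v_0$; this amounts to comparing $\gamma$ (from Lemma \ref{L2.1}) with $c$, and is a trivial constant-tracking step rather than a real obstacle. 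The rest is essentially a repetition of the Fourier-analytic argument from the proof of Theorem \ref{T2.1}, but now yielding an estimate that is \emph{uniform over boxes of fixed volume} rather than only over boxes empty of Fibonacci points.
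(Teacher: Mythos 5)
Your proposal is correct and follows exactly the route the paper indicates: the paper does not write out a proof of Theorem~\ref{dT4.3} but states that it is obtained ``in the same way'' as Theorem~\ref{dT4.2}, using Lemma~\ref{L2.2} in place of Lemma~\ref{L3.1}. You supply precisely that argument, including the geometric-series summation over $v\ge v_0$ (which in the paper's terminology is the ``analog of Lemma~\ref{L3.2}'') and the constant-tracking step ensuring $2^{v_0}u_1u_2\ge 1$ so that the hypothesis of Lemma~\ref{L2.2} holds on the whole range $v\ge v_0$; both the factorization $\delta_B \le pr(\bu,2)\sum_{\bk\in L(n)'}|\hat h_\bu(\bk)|$ and the final bound $\delta_B \ll 2^{-2v_0}\log(2V/V_0) \ll b_n^{-2}\log(2V/V_0)$ match what the paper's sketched argument yields.
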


Theorem \ref{dT4.3} implies Theorem \ref{T2.1}. Also, Theorem \ref{dT4.3} 
provides the following inequalities for the Fibonacci point sets $\cF_n$
$$
D^{2,o}(\cF_n,V) \le D^{2}(\cF_n,V) \le C(\log(2V/V_0))/b_n^2,\qquad V\ge V_0.
$$

\section{Generalization for higher smoothness}
\label{rSD}

In the definition of $D^1(T)$ and $D^{1,o}(T)$ -- the $1$-smooth discrepancy -- we used as a building block the univariate characteristic function $\chi_{[-u/2,u/2)}(x)$ identified by the parameter $u\in \R_+$. In numerical integration $L_1$-smoothness of a function plays an important role. A characteristic function of an interval has smoothness $1$ in the $L_1$ norm. This is why we call the corresponding discrepancy characteristics the $1$-smooth discrepancy. In the definition of $D^2(T)$,
$D^{2,o}(T)$, $D^2(T,V)$, and $D^{2,o}(T,V)$ we use the hat function 
$h_{[-u,u)}(x) =|u-x|$ for $|x|\le u$ and $h_{[-u,u)}(x) =0$ for $|x|\ge u$ instead of the characteristic function $\chi_{[-u/2,u/2)}(x)$. Function $h_{[-u,u)}(x)$ has smoothness $2$ in $L_1$. This fact gives the corresponding name. Note that
$$
h_{[-u,u)}(x) = \chi_{[-u/2,u/2)}(x) \ast \chi_{[-u/2,u/2)}(x),
$$
where
$$
f(x)\ast g(x) := \int_\R f(x-y)g(y)dy.
$$
Now, for $r=1,2,3,\dots$ we inductively define
$$
h^1(x,u):= \chi_{[-u/2,u/2)}(x),\qquad h^2(x,u):= h_{[-u,u)}(x),
$$
$$
h^r(x,u) := h^{r-1}(x,u)\ast h^1(x,u),\qquad r=3,4,\dots.
$$
Then $h^r(x,u)$ has smoothness $r$ in $L_1$ and has support $(-ru/2,ru/2)$. 
Represent a box $B\in\cB$ in the form
$$
B= \prod_{j=1}^d [x^0_j-ru_j/2,x^0_j+ru/2)
$$
and define
$$
h^r_B(\bx):= \prod_{j=1}^d h^r(x_j-x^0_j,u_j).
$$
We define the quantities $D^r(T)$,
$D^{r,o}(T)$, $D^r(T,V)$, and $D^{r,o}(T,V)$ replacing $h_B(\bx)$ by $h^r_B(\bx)$ in the definitions (\ref{d4.2}) -- (\ref{d4.3'}). By the properties of convolution we obtain
$$
\hat h^r(y,u)  = \hat h^{r-1}(y,u) \hat h^1(y,u),
$$
which implies for $y\neq 0$
$$
\hat h^r(y,u) = \left(\frac{\sin(\pi yu)}{\pi y}\right)^r.
$$
Therefore,
$$
|\hat h^r(y,u)| \le \min\left(|u|^r,\frac{1}{|y|^r}\right) = \left(\frac{|u|}{|y|}\right)^{r/2}\min\left(|yu|^{r/2},\frac{1}{|yu|^{r/2}}\right).
$$
Consider
$$
\sigma^r(v,\bu):= \sum_{\|\bs\|_1=v}\prod_{j=1}^d \min\left((2^{s_j}u_j)^{r/2},\frac{1}{(2^{s_j}u_j)^{r/2}}\right),\quad v\in\N_0.
$$
In the case $r=2$ we have $\sigma^2(v,\bu) = \sigma(v,\bu)$ with $\sigma(v,\bu)$ defined and estimated in Section \ref{Tech}. In the same way as Lemma \ref{L3.1} has been proved we can prove the following its generalization for all $r\in \N$. 

\begin{Lemma}\label{dL6.1} Let $v\in \N_0$ and $\bu\in \R^d_+$. Then we have 
the following inequalities.

(I) Under condition $2^vpr(\bu,d)\ge 1$ we have
\be\label{d3.2}
\sigma^r(v,\bu) \le C(d) \frac{\left(\log(2^{v+1}pr(\bu,d))\right)^{d-1}}{(2^vpr(\bu,d))^{r/2}}.
\ee

(II) Under condition $2^vpr(\bu,d)\le 1$ we have
\be\label{d3.3}
\sigma^r(v,\bu) \le C(d) (2^vpr(\bu,d))^{r/2} \left(\log\frac{2}{2^{v}pr(\bu,d)}\right)^{d-1}.
\ee

\end{Lemma}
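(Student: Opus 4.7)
The plan is to mirror the proof of Lemma \ref{L3.1} almost verbatim, with the exponent $2$ replaced by $r$ throughout, and to package the elementary geometric-series estimates that underlie the original argument as $r$-analogs of Lemmas \ref{L3.3} and \ref{L3.2}. Specifically, I would first record the two auxiliary estimates
\[
\sum_{2^k\le A} 2^{rk}\left(\log\frac{2B}{2^k}\right)^\nu \le C(r,\nu)\,A^{r}\left(\log\frac{2B}{A}\right)^\nu \qquad (A\ge 1,\ B\ge A),
\]
\[
\sum_{2^k\ge A} 2^{-rk}\left(\log\frac{2^{k+1}}{B}\right)^\nu \le C(r,\nu)\,A^{-r}\left(\log\frac{2A}{B}\right)^\nu \qquad (A\ge 1,\ 0<B\le A),
\]
which follow from summation by parts against a geometric series of ratio $2^r$ (respectively $2^{-r}$) in exactly the way that Lemmas \ref{L3.3} and \ref{L3.2} do for $r=2$.

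Then I would proceed by induction on $d$. For the base case $d=2$, I would split $\sigma^r(v,\bu)$ into the three index sets $S_1,S_2,S_3$ (in case (I)) or $S_1',S_2',S_3'$ (in case (II)) that were used in the proofs of Lemmas \ref{L2.2} and \ref{L3.1}. On the two ``extreme'' pieces one factor is of the form $(2^{s_j}u_j)^{r/2}$ and the other is $(2^{s_j}u_j)^{-r/2}$, so a single geometric series of ratio $2^r$ gives the claimed bound $(2^v pr(\bu,2))^{-r/2}$ in case (I) and $(2^v pr(\bu,2))^{r/2}$ in case (II); on the ``middle'' piece both factors are $(\cdot)^{-r/2}$ (respectively $(\cdot)^{r/2}$) and summation over $s_1$ produces the single logarithmic factor.

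For the induction step $d-1\Rightarrow d$, I would use the factorization
\[
\sigma^r(v,\bu) = \sum_{s_d=0}^v \min\!\left((2^{s_d}u_d)^{r/2},(2^{s_d}u_d)^{-r/2}\right)\sigma^r(v-s_d,\bu^d),
\]
and split the outer sum into the three ranges $U_1,U_2,U_3$ (resp.\ $U_1',U_2',U_3'$) exactly as in the proof of Lemma \ref{L3.1}. On each range the induction hypothesis applies with a definite sign of $2^{v-s_d}pr(\bu,d-1)-1$, producing either \eqref{d3.2} or \eqref{d3.3} for $d-1$; the resulting sums in $s_d$ are of the type handled by the two auxiliary lemmas above with $\nu=d-2$. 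The middle sum $U_2$ (resp.\ $U_2'$) contributes the extra $\log$ factor that raises the exponent from $d-2$ to $d-1$.

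I do not expect any genuine obstacle: the arithmetic of the exponents works out because $(r/2)+(r/2)=r$, so the extreme pieces telescope into a clean power of $2^v pr(\bu,d)$, while the middle piece again yields only the linear gain in the logarithm. The one place to be careful is the transition from the case-(I) bound of the inductive hypothesis to the case-(II) hypothesis (and vice versa) as $s_d$ crosses the threshold $2^{s_d}\asymp 2^v pr(\bu,d-1)$; I would verify that at this threshold the two bounds agree up to a constant, so splitting the sum there is harmless. Finally, a remark analogous to Remark \ref{R3.1} on the growth of $C(d)$ in $d$ follows by tracking constants through the induction.
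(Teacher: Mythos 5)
Your proposal is correct and is exactly the argument the paper intends: the text itself says only that Lemma \ref{dL6.1} is proved ``in the same way as Lemma \ref{L3.1}'' and supplies no further details, and your plan — replace the exponent $2$ by $r$ throughout, use $r$-analogs of Lemmas \ref{L3.3} and \ref{L3.2} (geometric series of ratio $2^{\pm r}$), split over $S_1,S_2,S_3$ and $U_1,U_2,U_3$ with the same case analysis, and note that the product of the two extreme factors telescopes to $(2^v\,pr(\bu,d))^{\pm r/2}$ because $r/2 + r/2 = r$ — is precisely what is needed. One small simplification you could note: since the geometric ratio $2^r$ satisfies $(1-2^{-r})^{-1}\le 2$ for all $r\ge 1$, the constants in the two auxiliary sums can be taken independent of $r$, which is why the lemma's constant is $C(d)$ rather than $C(d,r)$.
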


Using Lemma \ref{dL6.1} instead of Lemma \ref{L3.1} we prove the following generalization of Theorem \ref{dT4.2}. 

\begin{Theorem}\label{dT6.1} Let $r\ge 2$. There exists a constant $c(d,A,r)>0$ such that for any $V\ge V_0:= c(d,A,r)a^{-d}$ we have for all $B\in\cB$, $vol(B)=V$,
\be\label{d6.1}
|\Phi(a,A)(h^r_B) - \hat h^r_B(\mathbf 0)| \le C(d,A,r)a^{-rd} (\log(2V/V_0))^{d-1}.
\ee
\end{Theorem}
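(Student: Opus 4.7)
The plan is to follow the template of the proof of Theorem \ref{dT4.2} almost verbatim, substituting the hat function $h_B$ with its higher-smoothness analogue $h^r_B$ and using Lemma \ref{dL6.1} in place of Lemma \ref{L3.1}. Since by definition $h^r_B(\bx) = \prod_j h^r(x_j - x^0_j, u_j)$ with $u_j$'s satisfying $V = vol(B) = r^d \, pr(\bu,d)$, we will choose the constant $c(d,A,r)$ so that $V_0 \asymp 2^{-v_0}$ where $2^{v_0}$ is the smallest dyadic integer with $2^{v_0} \ge a^d$; this ensures $pr(\bu,d) \asymp V$ with implicit constants depending only on $d$ and $r$, and the regime $V \ge V_0$ corresponds exactly to $2^v \, pr(\bu,d) \ge 1$ for all $v \ge v_0$, putting us in case (I) of Lemma \ref{dL6.1}.

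I would begin by writing the error via the Poisson-type identity (\ref{4.3}):
\[
\delta_B := |\Phi(a,A)(h^r_B) - \hat h^r_B(\mathbf 0)| \le \sum_{\bm \ne 0} |\hat h^r_B(aA\bm)|.
\]
Next I would decompose $\Z^d\setminus\{0\}$ into dyadic blocks $\rho(\bs)$ and invoke property $1^0$ of Lemma \ref{L4.1} to discard those $\bs$ with $\|\bs\|_1 = v < v_0$ (no admissible $\bm$), and property $2^0$ to count $|\rho(\bs)\cap\{aA\bm\}_{\bm\in\Z^d}| \le C_1 2^{v-v_0}$. Using the product form of $\hat h^r_B$ together with the bound $|\hat h^r(y,u)| \le (|u|/|y|)^{r/2}\min(|yu|^{r/2},|yu|^{-r/2})$ derived in the excerpt, for any $\bk \in \rho(\bs)$ with $\|\bs\|_1=v$ I obtain
\[
|\hat h^r_B(\bk)| \ll \frac{pr(\bu,d)^{r/2}}{2^{vr/2}} \prod_{j=1}^d \min\!\left((2^{s_j}u_j)^{r/2},(2^{s_j}u_j)^{-r/2}\right).
\]

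Summing over $\bs$ with $\|\bs\|_1 = v$ produces exactly $\sigma^r(v,\bu)$, so combining the three estimates yields
\[
\delta_B \ll 2^{-v_0}\, pr(\bu,d)^{r/2} \sum_{v=v_0}^\infty 2^{v(1-r/2)}\, \sigma^r(v,\bu).
\]
Applying inequality (\ref{d3.2}) of Lemma \ref{dL6.1} (case (I), which holds throughout the range of summation by choice of $c(d,A,r)$), the $pr(\bu,d)^{r/2}$ prefactor cancels and I am left with
\[
\delta_B \ll 2^{-v_0} \sum_{v=v_0}^\infty 2^{v(1-r)} \bigl(\log(2^{v+1}pr(\bu,d))\bigr)^{d-1}.
\]
For $r \ge 2$ we have $1-r \le -1$, so the geometric factor dominates the polylog, and the sum is controlled by its first term, giving $\delta_B \ll 2^{-v_0 r}(\log(2V/V_0))^{d-1} \asymp a^{-rd}(\log(2V/V_0))^{d-1}$, which is the desired bound.

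The main obstacle is essentially bookkeeping: making sure Lemma \ref{dL6.1} has been established (the proof of which is the natural induction on $d$ mirroring the proof of Lemma \ref{L3.1}, with the minimum exponents $1$ replaced by $r/2$ throughout) and checking that the constants $c(d,A,r)$, $C(d,A,r)$ can be chosen independently of the position $\bx^0$ and the specific aspect ratios of $\bu$ subject to $pr(\bu,d) \asymp V$. The use of $r \ge 2$ is critical only at the final step, where it guarantees the geometric series in $v$ is summable and dominated by its first term; for $r=1$ the series would be logarithmically divergent and the claimed bound would fail, consistent with the fact that $h^1_B$ is the characteristic function and leads to ordinary (logarithm-losing) discrepancy estimates.
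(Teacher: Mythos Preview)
Your proposal is correct and follows essentially the same approach as the paper, which simply says to repeat the proof of Theorem \ref{dT4.2} with Lemma \ref{dL6.1} in place of Lemma \ref{L3.1}; you have carried out exactly this substitution, with the correct bookkeeping for the extra factor $(pr(\bu,d)/2^v)^{r/2}$ coming from $|\hat h^r(y,u)|$ and the final geometric summation in $v$ (the analog of Lemma \ref{L3.2}).
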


\begin{Corollary}\label{dC6.1} For $r\ge2$ there exists a constant $c(d,A,r)>0$ such that for any $V\ge V_0:= c(d,A,r)a^{-d}$ we have
\be\label{d6.2}
D^{r,o}(\cF(a,A),V) \le C(d,A,r)a^{-rd} (\log(2V/V_0))^{d-1}.
\ee
\end{Corollary}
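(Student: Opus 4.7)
The plan is to observe that Corollary \ref{dC6.1} is a direct consequence of Theorem \ref{dT6.1} once a suitable (uniform in $B$) choice of quadrature weights is exhibited. Concretely, I would take the native weights of the Frolov cubature formula,
$$
w_\mu := (a^d|\det A|)^{-1}, \qquad \mu=1,\ldots,N,
$$
which are independent of $B$ and of $V$. These are admissible in the infimum defining $D^{r,o}(\cF(a,A),V)$.

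Next I would verify a support property that makes $\Phi(a,A)$ and the truncated sum over $\cF(a,A)$ coincide on $h^r_B$. By the definition of $h^r(x,u)$ as an $r$-fold convolution of $\chi_{[-u/2,u/2)}$, its support is $(-ru/2, ru/2)$. Hence for $B = \prod_{j=1}^d [x^0_j - ru_j/2, x^0_j + ru_j/2) \in \cB$ we have $\operatorname{supp}(h^r_B) \subset B \subset [0,1)^d$. Therefore any lattice point $(A^{-1})^T \bm/a$ lying outside $[0,1]^d$ contributes zero to the full Frolov sum, so
$$
\Phi(a,A)(h^r_B) = \sum_{\mu=1}^N w_\mu\, h^r_B(z^\mu), \qquad \hat h^r_B(\mathbf 0) = \int_{[0,1]^d} h^r_B(\bx)\,d\bx.
$$
Applying Theorem \ref{dT6.1} to $h^r_B$ gives, uniformly in $B$ with $\operatorname{vol}(B) = V \ge V_0$,
$$
\left|\int_{[0,1]^d} h^r_B(\bx)\,d\bx - \sum_{\mu=1}^N w_\mu\, h^r_B(z^\mu)\right| \le C(d,A,r)\, a^{-rd}\, (\log(2V/V_0))^{d-1}.
$$
Taking the supremum over $B \in \cB$ of fixed volume $V$ and then passing to the infimum over weights (which our explicit uniform choice bounds from above) yields the claimed estimate.

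There is no real obstacle here; the corollary is essentially a bookkeeping step on top of Theorem \ref{dT6.1}, which itself is the generalization of Theorem \ref{dT4.2} via Lemma \ref{dL6.1} in place of Lemma \ref{L3.1}. The single point worth explicit verification is the containment $\operatorname{supp}(h^r_B) \subset [0,1)^d$, without which one would need to add boundary corrections when replacing the Frolov sum over $\Z^d$ by the sum over $\cF(a,A)$; this follows immediately from the conventions in the definitions of $\cB$ and $h^r_B$.
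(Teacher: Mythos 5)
Your proof is correct and matches the paper's implicit argument: Corollary \ref{dC6.1} follows from Theorem \ref{dT6.1} by choosing the uniform Frolov weights $w_\mu=(a^d|\det A|)^{-1}$ (exactly as Theorem \ref{dT4.1} follows from Theorem \ref{dT4.2}), and your verification that $\operatorname{supp}(h^r_B)\subset B\subset[0,1)^d$ correctly justifies replacing the full lattice sum by the sum over $\cF(a,A)$.
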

 
Similar generalizations can be obtained for the Fibonacci point sets. Using Lemma \ref{dL6.1} instead of Lemma \ref{L2.2} we obtain the following results. 

\begin{Theorem}\label{dT6.2} Let $d=2$, $r\ge2$. There exists a   constant $c(r)>0$ such that for any $V\ge V_0:= c(r)/b_n$ we have for all $B\in\cB$, $vol(B)=V$
\be\label{d6.3}
\left|b_n^{-1}\sum_{\mu=1}^{b_n} h^r_B(\mu/b_n,\{\mu b_{n-1}/b_n\}) - \hat h^r_B(\mathbf 0)\right| \le C(r)\log(2V/V_0)/b_n^r.
\ee
\end{Theorem}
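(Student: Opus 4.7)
The plan is to follow the strategy of Theorem \ref{dT4.3}, using Lemma \ref{dL6.1} in place of Lemma \ref{L2.2}. First, the Fibonacci cubature formula satisfies the Poisson-type identity (\ref{2.2})--(\ref{2.3}); extended to $h^r_B$ (whose compact support lies inside a unit cell, so the periodization is exact), this yields
$$
\delta_B := \left|b_n^{-1}\sum_{\mu=1}^{b_n} h^r_B(\mu/b_n,\{\mu b_{n-1}/b_n\}) - \hat h^r_B(\mathbf 0)\right| \le \sum_{\bk\in L(n)'}|\hat h^r_B(\bk)|.
$$
Partition this sum dyadically into blocks $\rho(\bs)$ with $\|\bs\|_1=v$: Lemma \ref{L2.1} eliminates all terms with $v<v_0$, where $v_0$ is the smallest integer with $2^{v_0}>\ga b_n$ (so $2^{v_0}\asymp b_n$), while bound (\ref{2.8}) gives $|\rho(\bs)\cap L(n)|\le C_1 2^{v-v_0}$ for $v\ge v_0$.

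Second, apply the pointwise estimate from Section \ref{rSD}, namely $|\hat h^r(k,u)|\le (|u|/|k|)^{r/2}\min(|ku|^{r/2},|ku|^{-r/2})$. For $\bk\in\rho(\bs)$ this factorizes as
$$
|\hat h^r_B(\bk)|\ll (pr(\bu,2))^{r/2}\,2^{-vr/2}\prod_{j=1}^{2}\min\bigl((2^{s_j}u_j)^{r/2},(2^{s_j}u_j)^{-r/2}\bigr).
$$
Summing over $\bk\in\rho(\bs)\cap L(n)'$ and then over $\|\bs\|_1=v$ assembles the quantity $\sigma^r(v,\bu)$. Choosing $c(r)$ large enough that $V\ge V_0=c(r)/b_n$ forces $2^{v_0}pr(\bu,2)\ge 1$, Lemma \ref{dL6.1}(I) applies for every $v\ge v_0$ and gives
$$
\sigma^r(v,\bu)\ll\frac{\log(2^{v+1}pr(\bu,2))}{(2^v pr(\bu,2))^{r/2}}.
$$

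Combining these ingredients,
$$
\delta_B\ll 2^{-v_0}\sum_{v\ge v_0} 2^{v(1-r)}\log\bigl(2^{v+1}pr(\bu,2)\bigr).
$$
For $r\ge 2$ the factor $2^{v(1-r)}$ decays geometrically in $v$, so the tail is controlled by the $v=v_0$ contribution; splitting $\log(2^{v+1}pr(\bu,2))=\log(2^{v_0+1}pr(\bu,2))+(v-v_0)\log 2$ and summing yields $\delta_B\ll 2^{-v_0 r}\log(2^{v_0+1}pr(\bu,2))$. Using $2^{v_0}\asymp b_n$ together with $vol(B)=r^2 pr(\bu,2)\asymp V$ (recall $r$ is fixed), and noting that $b_n\,pr(\bu,2)\asymp V/V_0$, this becomes $\ll b_n^{-r}\log(2V/V_0)$, which is (\ref{d6.3}).

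The principal technical points will be (a) calibrating $c(r)$ precisely so that Lemma \ref{dL6.1}(I) is already applicable at $v=v_0$ (not just asymptotically) and (b) justifying the Poisson identity for $h^r_B$ on $\T^2$, which is routine because the support of $h^r_B$ fits inside a single unit cell once $\bu$ is small enough, so its periodization is exact. The assumption $r\ge 2$ enters exactly at the last step, ensuring convergence of the geometric series $\sum_{v\ge v_0} 2^{v(1-r)}$; at $r=2$ one recovers Theorem \ref{dT4.3}.
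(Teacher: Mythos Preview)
Your proposal is correct and follows essentially the same approach the paper indicates: the paper does not give a detailed proof of Theorem \ref{dT6.2} but simply says to replace Lemma \ref{L2.2} by Lemma \ref{dL6.1} in the argument for Theorem \ref{dT4.3}, and your write-up is exactly that execution, with the Fourier estimate $|\hat h^r(k,u)|\le(|u|/|k|)^{r/2}\min(|ku|^{r/2},|ku|^{-r/2})$ feeding into $\sigma^r(v,\bu)$ and then Lemma \ref{dL6.1}(I). One small clarification: since $B\in\cB\subset[0,1)^2$ and $h^r_B$ is supported exactly on $B$, the periodization in point (b) is automatically exact for every admissible $\bu$, not merely ``once $\bu$ is small enough.''
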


  Theorem \ref{dT6.2} 
provides the following inequalities for the Fibonacci point sets $\cF_n$ in case $r\ge 2$
$$
D^{r,o}(\cF_n,V) \le D^{r}(\cF_n,V) \le C(r)(\log(2V/V_0))/b_n^r,\qquad V\ge V_0.
$$

\section{Universal discretization of the uniform norm}
\label{Un}

In this section we demonstrate an application of results from Sections \ref{Fib} and \ref{Fro} to the problem of universal discretization. For a more detailed discussion of 
universality in approximation and learning theory we refer the reader to \cite{Tem16}, 
\cite{TBook}, \cite{T11}, \cite{DTU}, \cite{VT162}, \cite{GKKW},
\cite{BCDDT}, \cite{VT113}. We remind the discretization problem setting, which we plan to discuss (see \cite{VT160} and \cite{VT161}). 

{\bf Marcinkiewicz problem.} Let $\Omega$ be a compact subset of $\R^d$ with the probability measure $\mu$. We say that a linear subspace $X_N$ of the $L_q(\Omega)$, $1\le q < \infty$, admits the Marcinkiewicz-type discretization theorem with parameters $m$ and $q$ if there exist a set $\{\xi^\nu \in \Omega, \nu=1,\dots,m\}$ and two positive constants $C_j(d,q)$, $j=1,2$, such that for any $f\in X_N$ we have
\be\label{d1.1}
C_1(d,q)\|f\|_q^q \le \frac{1}{m} \sum_{\nu=1}^m |f(\xi^\nu)|^q \le C_2(d,q)\|f\|_q^q.
\ee
In the case $q=\infty$ we define $L_\infty$ as the space of continuous on $\Omega$ functions and ask for 
\be\label{d1.2}
C_1(d)\|f\|_\infty \le \max_{1\le\nu\le m} |f(\xi^\nu)| \le  \|f\|_\infty.
\ee
We will also use a brief way to express the above property: the $\cM(m,q)$ theorem holds for  a subspace $X_N$ or $X_N \in \cM(m,q)$. 

{\bf Universal discretization problem.} This problem is about finding (proving existence) of 
a set of points, which is good in the sense of the above Marcinkiewicz-type discretization 
for a collection of linear subspaces (see \cite{VT162}). We formulate it in an explicit form. Let $\cX_N:= \{X_N^j\}_{j=1}^k$ be a collection of linear subspaces $X_N^j$ of the $L_q(\Omega)$, $1\le q \le \infty$. We say that a set $\{\xi^\nu \in \Omega, \nu=1,\dots,m\}$ provides {\it universal discretization} for the collection $\cX_N$ if, in the case $1\le q<\infty$, there are two positive constants $C_i(d,q)$, $i=1,2$, such that for each $j\in [1,k]$ and any $f\in X_N^j$ we have
\be\label{1.1u}
C_1(d,q)\|f\|_q^q \le \frac{1}{m} \sum_{\nu=1}^m |f(\xi^\nu)|^q \le C_2(d,q)\|f\|_q^q.
\ee
In the case $q=\infty$  for each $j\in [1,k]$ and any $f\in X_N^j$ we have
\be\label{1.2u}
C_1(d)\|f\|_\infty \le \max_{1\le\nu\le m} |f(\xi^\nu)| \le  \|f\|_\infty.
\ee

In \cite{VT162} we studied the universal discretization  for the collection of subspaces of trigonometric polynomials with frequencies from parallelepipeds (rectangles). For $\bs\in\N^d_0$
define
$$
R(\bs) := \{\bk \in \Z^d :   |k_j| < 2^{s_j}, \quad j=1,\dots,d\}.
$$
    Let $Q$ be a finite subset of $\Z^d$. We denote
$$
\Tr(Q):= \{f: f=\sum_{\bk\in Q}c_\bk e^{i(\bk,\bx)}\}.
$$
Consider the collection $\cC(n,d):= \{\Tr(R(\bs)), \|\bs\|_1=n\}$.

The following theorem was proved in \cite{VT162}.
\begin{Theorem}\label{T5.1} Let a set $T$ with cardinality $|T|= 2^r=:m$ have dispersion 
satisfying the bound disp$(T) < C(d)2^{-r}$ with some constant $C(d)$. Then there exists 
a constant $c(d)\in \N$ such that the set $2\pi T:=\{2\pi\bx: \bx\in T\}$ provides the universal discretization in $L_\infty$ for the collection $\cC(n,d)$ with $n=r-c(d)$.
\end{Theorem}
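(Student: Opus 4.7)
The upper bound in (\ref{1.2u}) is immediate since $|f(2\pi\bz)| \le \|f\|_\infty$ for every $\bz \in T$, so only the lower bound requires work. Fix $\bs$ with $\|\bs\|_1 = n$ and any nonzero $f \in \Tr(R(\bs))$, and choose $\bx_0 \in \T^d$ with $|f(\bx_0)| = \|f\|_\infty$ (which exists by continuity and compactness of $\T^d$). The plan is to locate a point $\xi \in 2\pi T$ close enough to $\bx_0$ that $|f(\xi)| \ge \tfrac12\|f\|_\infty$, which yields the lower bound in (\ref{1.2u}) with $C_1(d) = 1/2$.

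The required quantitative notion of ``close enough'' comes from a Bernstein-type estimate. Since $f \in \Tr(R(\bs))$ has degree strictly less than $2^{s_j}$ in $x_j$, the classical one-dimensional Bernstein inequality applied variable by variable gives $\|\partial f/\partial x_j\|_\infty \le 2^{s_j}\|f\|_\infty$. Moving from $\bx_0$ to $\bx_0 + \mathbf h$ one coordinate at a time along the axis-parallel segments therefore yields
$$
|f(\bx_0+\mathbf h) - f(\bx_0)| \le \sum_{j=1}^d 2^{s_j}|h_j|\,\|f\|_\infty,
$$
so setting $\delta := 1/(2d)$ and requiring $|h_j| \le \delta\,2^{-s_j}$ for all $j$ forces $|f(\bx_0+\mathbf h)| \ge \tfrac12\|f\|_\infty$. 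Let $B$ denote the torus box centred at $\bx_0$ with half-widths $\delta\,2^{-s_j}$; it has torus volume $(2\delta)^d 2^{-n}$, and every point of $B$ is an approximate maximizer in the required sense.

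The step I expect to be the main obstacle is forcing a point of $2\pi T$ into $B$ via the dispersion hypothesis. Dispersion is defined for axis-parallel boxes strictly inside $[0,1)^d$, but the rescaled box $(2\pi)^{-1}B$ naturally lives on the torus and may wrap around the boundary in some coordinates; decomposing it into pieces fully contained in $[0,1)^d$ produces at most $2^d$ axis-parallel sub-boxes, and the largest of these has volume at least $(2\delta/(2\pi))^d 2^{-n}/2^d = (\delta/(2\pi))^d 2^{-n}$. I would then pick $c(d)$ so that this lower bound strictly exceeds $C(d)2^{-r}$; with $\delta = 1/(2d)$ the inequality reduces to $2^{r-n} > C(d)(4\pi d)^d$, which is guaranteed by $c(d) := \lceil \log_2(C(d)(4\pi d)^d)\rceil + 1$ and $n := r - c(d)$. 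The dispersion hypothesis then forces some $\bz \in T$ into the chosen sub-box, so $\xi := 2\pi\bz$ lies in $B$ modulo $2\pi$, and the Bernstein estimate above gives $|f(\xi)| \ge \tfrac12\|f\|_\infty$, completing the proof.
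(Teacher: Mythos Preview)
Your argument is correct. The Bernstein inequality applied coordinatewise gives exactly the local stability estimate you wrote, the torus wrap-around is handled correctly by splitting into at most $2^d$ genuine sub-boxes in $[0,1)^d$ and keeping the largest, and the choice of $c(d)$ then forces the dispersion hypothesis to produce a point $\bz\in T$ with $2\pi\bz$ inside the stability box. One small technicality you might mention explicitly: sub-boxes touching the face $\{x_j=1\}$ are not literally in $\cB$ as defined, but shrinking by an arbitrarily small $\varepsilon$ preserves the strict volume inequality and the emptiness, so this causes no problem.

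As for comparison with the paper: there is nothing to compare against. The present paper does not prove Theorem~\ref{T5.1}; it is quoted from \cite{VT162} and only \emph{applied} here, in combination with Theorems~\ref{T2.1} and~\ref{T4.1}, to conclude that the Fibonacci and Frolov sets provide universal $L_\infty$ discretization. Your Bernstein-plus-dispersion argument is the natural route and is presumably close to what \cite{VT162} does, but the paper under review contains no proof of its own for this statement.
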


Theorem \ref{T5.1} in a combination with Theorems \ref{T2.1} and \ref{T4.1} guarantees that the appropriately chosen Fibonacci ($d=2$) and  Frolov (any $d\ge 2$) point sets provide 
universal discretization in $L_\infty$ for the collection $\cC(n,d)$.

\section{Universal discretization of the $L_q$ norm}
\label{Lq}

We begin this section with proving a general conditional result. Then we derive from 
it universality of the Fibonacci point sets for discretization of the $L_q$ norm for all $1\le q\le \infty$. We formulate the universality problem with weights.

{\bf Marcinkiewicz problem with weights.}   We say that a linear subspace $X_N$ of the $L_q(\Omega)$, $1\le q < \infty$, admits the weighted Marcinkiewicz-type discretization theorem with parameters $m$ and $q$ if there exist a set of knots $\{\xi^\nu \in \Omega\}$, a set of weights $\{w_\nu\}$, $\nu=1,\dots,m$, and two positive constants $C_j(d,q)$, $j=1,2$, such that for any $f\in X_N$ we have
\be\label{6.1}
C_1(d,q)\|f\|_q^q \le  \sum_{\nu=1}^m w_\nu |f(\xi^\nu)|^q \le C_2(d,q)\|f\|_q^q.
\ee
Then we also say that the $\cM^w(m,q)$ theorem holds for  a subspace $X_N$ or $X_N \in \cM^w(m,q)$. 
Obviously, $X_N\in \cM(m,q)$ implies that $X_N\in \cM^w(m,q)$. 

{\bf Universal discretization problem with weights.} This problem is about finding (proving existence) of 
a set of points and a set of weights  which are good in the sense of the above Marcinkiewicz-type discretization with weights
for a collection of linear subspaces. We formulate it in an explicit form. Let $\cX_N:= \{X_N^j\}_{j=1}^k$ be a collection of linear subspaces $X_N^j$ of the $L_q(\Omega)$, $1\le q < \infty$. We say that a set of knots $\{\xi^\nu \in \Omega\}$ and a set of weights $\{w_\nu\}$, $\nu=1,\dots,m$, provide {\it universal discretization with weights} for the collection $\cX_N$ if  there are two positive constants $C_i(d,q)$, $i=1,2$, such that for each $j\in [1,k]$ and any $f\in X_N^j$ we have
\be\label{6.2}
C_1(d,q)\|f\|_q^q \le   \sum_{\nu=1}^m w_\nu|f(\xi^\nu)|^q \le C_2(d,q)\|f\|_q^q.
\ee
 
 For a set of knots $\Xi_m:=\{\xi^\nu\}_{\nu=1}^m\subset  \Omega$ and a set of weights $W_m:=\{w_\nu\}_{\nu=1}^m$ consider the cubature formula
 \be\label{6.3}
 I_m:=I_m(\Xi_m,W_m)(f) := \sum_{\nu=1}^m w_\nu f(\xi^\nu).
 \ee
 For $\bN\in \N^d_0$ define a subspace of trigonometric polynomials
 $$
 \Tr(\bN) := \left\{f(\bx): f(\bx)= \sum_{\bk: |k_j|\le N_j,j=1,\dots,d} c_\bk e^{i(\bk,\bx)}\right\}.
 $$
The following Lemma \ref{L6.1} is the conditional result that we mentioned above.

\begin{Lemma}\label{L6.1} Let $\bN\in \N^d_0$ and let a set of knots $\Xi_m:=\{\xi^\nu\}_{\nu=1}^m\subset \T^d$ and a set of nonnegative weights $W_m:=\{w_\nu\}_{\nu=1}^m$
be such that for any $f\in \Tr(3\bN)$ we have
\be\label{6.4}
I_m(\Xi_m,W_m)(f) = (2\pi)^{-d}\int_{\T^d} f(\bx)d\bx.
\ee
Then for any $1\le q\le \infty$ we have for all $f\in \Tr(\bN)$
\be\label{6.5}
C_1(d)\|f\|_q \le   \left(\sum_{\nu=1}^m w_\nu|f(\xi^\nu)|^q\right)^{1/q} \le C_2(d)\|f\|_q
\ee
with constants $C_1(d)$ and $C_2(d)$, which may only depend on $d$.
\end{Lemma}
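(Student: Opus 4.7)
The plan is to deduce all bounds in (6.5) from the exactness hypothesis (6.4) on $\Tr(3\bN)$, using the nonnegativity of the weights together with standard trigonometric polynomial inequalities.

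For $q=2$ the inequality is immediate and tight: for $f\in\Tr(\bN)$ we have $|f|^2=f\bar f\in\Tr(2\bN)\subset\Tr(3\bN)$, so (6.4) gives
\begin{equation*}
\sum_{\nu=1}^m w_\nu|f(\xi^\nu)|^2 = (2\pi)^{-d}\int_{\T^d}|f(\bx)|^2\,d\bx = \|f\|_2^2,
\end{equation*}
which is (6.5) with $C_1=C_2=1$ for $q=2$. Applied to $g\equiv 1$, hypothesis (6.4) also forces $\sum_\nu w_\nu=1$, so $\{w_\nu\}$ forms a probability distribution.

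To pass to general $q$, I would introduce a positive Jackson-type trigonometric kernel $K\in\Tr(2\bN)$ --- for instance a suitably normalized tensor product of squared Fej\'er kernels of degrees $N_j$ --- with $K\ge 0$, $(2\pi)^{-d}\int K\,d\bx=1$, and $K$ concentrated on a box of side lengths $\asymp 1/N_1,\dots,1/N_d$ about the origin. Since $K(\cdot-\by)\in\Tr(2\bN)\subset\Tr(3\bN)$ for every fixed $\by$, exactness yields the partition identity
\begin{equation*}
\sum_\nu w_\nu K(\xi^\nu-\by)=(2\pi)^{-d}\int_{\T^d}K(\bz-\by)\,d\bz=1,\qquad \by\in\T^d.
\end{equation*}
For the right inequality of (6.5) I would combine this with the pointwise polynomial maximal inequality $|f(\bx)|^q\le C\int K(\bx-\by)|f(\by)|^q\,d\by$, valid for $f\in\Tr(\bN)$ by Bernstein-type control of the oscillation of $|f|$ on the support scale $1/\bN$ of $K$; swapping sum with integral then gives $\sum_\nu w_\nu|f(\xi^\nu)|^q\le C\,\|f\|_q^q$.

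For the left inequality I would use the discrete reproducing formula $f(\bx_0)=\sum_\nu w_\nu f(\xi^\nu)V_\bN(\bx_0-\xi^\nu)$, obtained by applying (6.4) to $f(\cdot)V_\bN(\bx_0-\cdot)\in\Tr(3\bN)$, where $V_\bN\in\Tr(2\bN)$ is a de la Vall\'ee-Poussin-type kernel reproducing $\Tr(\bN)$ (so $\|V_\bN\|_1\le C(d)$ uniformly in $\bN$). H\"older's inequality in $(w_\nu)$ combined with a bound on $\sum_\nu w_\nu|V_\bN(\bx_0-\xi^\nu)|^{q'}$ --- or equivalently a duality argument that uses the upper bound together with the injectivity of $f\mapsto(f(\xi^\nu))_\nu$ guaranteed by the $q=2$ isometry --- then produces the lower bound. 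The hard part is establishing the polynomial maximal inequality of the upper bound and the dual estimate for the lower bound: both require Bernstein/Nikol'skii control applied multiplicatively rather than additively, and the $q=\infty$ endpoint is the most delicate, since there the lower bound is equivalent to the $1/\bN$-density of $\{\xi^\nu:w_\nu>0\}$ in $\T^d$, a density that itself must be extracted from (6.4) by testing exactness against concentrated positive polynomials in $\Tr(2\bN)$, e.g.\ $|D_{\bx_0}|^2$ built from Dirichlet-type sums $D_{\bx_0}\in\Tr(\bN)$ peaked at $\bx_0$.
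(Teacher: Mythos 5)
Your overall strategy shares the key ingredients with the paper's proof — the de la Vall\'ee Poussin reproducing formula obtained from exactness on $\Tr(3\bN)$, and the ``partition of unity'' $\sum_\nu w_\nu K(\xi^\nu-\by)=1$ for a positive kernel $K$ — and your $q=2$ observation and the identity $\sum_\nu w_\nu =1$ are both correct. However, there are two genuine gaps that keep the argument from closing.

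First, the pointwise maximal inequality $|f(\bx)|^q\le C\int K(\bx-\by)|f(\by)|^q\,d\by$, which carries your entire upper bound, cannot hold with $C$ independent of $q$: in one variable, testing $f(y)=\cos(Ny)$ at $\bx=0$ against a Jackson-type kernel concentrated at scale $1/N$ gives $\int K(y)|f(y)|^q\,dy\asymp q^{-1/2}$ while $|f(0)|^q=1$, so the needed constant grows at least like $\sqrt q$. Since the Lemma explicitly requires $C_1(d), C_2(d)$ depending on $d$ only, this route yields a strictly weaker statement. Second, your lower-bound sketch — H\"older in $\nu$ applied to $f(\bx_0)=\sum_\nu w_\nu f(\xi^\nu)\V_\bN(\bx_0-\xi^\nu)$, plus a bound on $\sum_\nu w_\nu|\V_\bN(\bx_0-\xi^\nu)|^{q'}$ — produces a pointwise bound on $|f(\bx_0)|$, i.e.\ an estimate of $\|f\|_\infty$, not of $\|f\|_q$; and the alternative ``duality with injectivity'' you mention is not quantitative. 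What is missing in both places is the structural device the paper uses: view the reproducing formula as $f = V_\bN(\ba)$ with $\ba=(f(\xi^\nu))_\nu$, where $V_\bN(\ba):=\sum_\nu w_\nu a_\nu\V_\bN(\cdot-\xi^\nu)$ is a linear operator from $\ell_q(W_m)$ to $L_q$. Its $\ell_1\to L_1$ norm is $\le 3^d$ because $\|\V_\bN\|_1\le 3^d$, and its $\ell_\infty\to L_\infty$ norm is $\le 3^d$ by exactly your partition identity applied to the Fej\'er decomposition $\V_{N_j}=2\K_{2N_j}-\K_{N_j}$ (the paper's Lemma~6.2); Riesz--Thorin then gives $\|V_\bN\|_{\ell_q(W_m)\to L_q}\le 3^d$ for all $q$, which yields the lower bound at once and, combined with a duality step $\sum_\nu w_\nu|f(\xi^\nu)|^q=\int f\cdot\bigl(V_\bN(\ba')\bigr)$ with $a'_\nu=\varepsilon_\nu|f(\xi^\nu)|^{q-1}$, also yields the upper bound with the same $q$-free constant.
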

\begin{proof}  We need some classical trigonometric polynomials. We begin with the univariate case. 
 The Dirichlet kernel of order $n$:
$$
\mathcal D_n (x):= \sum_{|k|\le n}e^{ikx} = e^{-inx} (e^{i(2n+1)x} - 1)
(e^{ix} - 1)^{-1} 
$$
\be\label{a1.1}
=\bigl(\sin (n + 1/2)x\bigr)\bigm/\sin (x/2)
\ee
   is an even trigonometric polynomial.  The Fej\'er kernel of order $n - 1$:
$$
\mathcal K_{n} (x) := n^{-1}\sum_{k=0}^{n-1} \mathcal D_k (x) =
\sum_{|k|\le n} \bigl(1 - |k|/n\bigr) e^{ikx} 
$$
$$
=\bigl(\sin (nx/2)\bigr)^2\bigm /\bigl(n (\sin (x/2)\bigr)^2\bigr).
$$
The Fej\'er kernel is an even nonnegative trigonometric
polynomial in $\Tr(n-1)$.  It satisfies the obvious relations
\be\label{FKm}
\| \mathcal K_{n} \|_1 = 1, \qquad \| \mathcal K_{n} \|_{\infty} = n.
\ee
The de la Vall\'ee Poussin kernel
$$
\mathcal V_{n} (x) := n^{-1}\sum_{l=n}^{2n-1} \mathcal D_l (x)= 2\K_{2n}(x)-\K_n(x) 
$$
is an even trigonometric
polynomial of order $2n - 1$.

In the multivariate case  define the Fej\'er and de la Vall\'ee Poussin kernels as follows:
$$
\K_\bN(\bx):=\prod_{j=1}^d \K_{N_j}(x_j), \qquad \mathcal V_{\mathbf N} (\bx) := \prod_{j=1}^d \mathcal V_{N_j}  (x_j)  ,\qquad
\mathbf N = (N_1 ,\dots,N_d) .
$$
For $f\in\Tr(\bN)$ we have for each $\bx\in\T^d$ that $f(\by)\V_\bN(\bx-\by)\in \Tr(3\bN)$ and by our condition (\ref{6.4}) we obtain
\be\label{6.6}
f(\bx) = (2\pi)^{-d}\int_{\T^d} f(\by)\V_\bN(\bx-\by)d\by = \sum_{\nu=1}^m w_\nu f(\xi^\nu)\V_\bN(\bx-\xi^\nu).
\ee
Define a space 
$$
\ell_q(W_m):=\{\ba\in \bbC^m\} \quad\text{with norm}\quad   \|\ba\|_{q,w}:=\left(\sum_{\nu=1}^m w_\nu |a_\nu|^q\right)^{1/q} .
$$
Let $V_\bN$ be the operator on $\ell_q(W_m)$ defined as follows:
$$
V_\bN(\ba) := \sum_{\nu=1}^{m} w_\nu a_\nu \mathcal V_\bN (\bx - \xi^\nu).
$$
Property (\ref{FKm}) implies that $\|\V_\bN\|_1 \le 3^d$. Therefore,
\be\label{6.7}
\| V_\bN \|_{\ell_1(W_m)\to L_1}\le 3^d.
\ee
We now bound the norm $\| V_\bN \|_{\ell_\infty(W_m)\to L_\infty}$. Clearly,
\be\label{6.8}
\| V_\bN \|_{\ell_\infty(W_m)\to L_\infty}\le \left\|\sum_{\nu=1}^{m} w_\nu | \mathcal V_\bN (\bx - \xi^\nu)|\right\|_\infty.
\ee
We need the following simple technical lemma.

\begin{Lemma}\label{L6.2} Under conditions of Lemma \ref{L6.1} we have
$$
\left\|\sum_{\nu=1}^{m} w_\nu | \mathcal V_\bN (\bx - \xi^\nu)|\right\|_\infty \le 3^d.
$$
\end{Lemma}
\begin{proof} Represent
$$
\V_{N_j}(t) = 2\K_{2N_j}(t)-\K_{N_j}(t).
$$
Using the fact that the Fej{\'e}r kernel is a nonnegative polynomial we obtain 
$$
\sum_{\nu=1}^{m} w_\nu | \mathcal V_\bN (\bx - \xi^\nu)| \le \sum_{\nu=1}^{m} w_\nu  \prod_{j=1}^d \left(2\K_{2N_j}(x_j - \xi^\nu_j) + \K_{N_j}(x_j - \xi^\nu_j)\right)
$$
and by (\ref{6.4}) and (\ref{FKm}) we continue
$$
\le \sum_{k=0}^d \binom{d}{k} 2^k 1^{d-k} =3^d.
$$
\end{proof}

Lemma \ref{L6.2} and inequality (\ref{6.7}) imply by the Riesz-Thorin interpolation theorem that 
\be\label{6.9}
\| V_\bN \|_{\ell_q(W_m)\to L_q}\le 3^d,\qquad 1\le q\le \infty.
\ee
By representation (\ref{6.6}) and inequality (\ref{6.9}) we obtain for $f\in \Tr(\bN)$
$$
\|f(\bx)\|_q = \left\|\sum_{\nu=1}^m w_\nu f(\xi^\nu)\V_\bN(\bx-\xi^\nu)\right\|_q \le 3^d \left(\sum_{\nu=1}^m w_\nu |f(\xi^\nu)|^q\right)^{1/q},
$$
which proves the first inequality in (\ref{6.5}) of Lemma \ref{L6.1}. 

We now prove the second inequality in (\ref{6.5}) of Lemma \ref{L6.1} for $1\le q < \infty$. In the case $q=\infty$ it is trivial. We have ($q':=\frac{q}{q-1}$)
\begin{align*}
\sum_{\nu=1}^m w_\nu |f(\xi^\nu)|^q & =
\sum_{\nu=1}^{m}w_\nu f(\xi^\nu)\varepsilon_\nu \bigl| f(\xi^\nu) \bigr|^{q-1} =\\
&=(2\pi)^{-d}\int_{\T^d}f(\bx)\sum_{\nu=1}^{m}w_\nu\varepsilon_\nu
\bigl|f(\xi^\nu)\bigr|^{q-1}\mathcal V_\bN(\bx-\xi^\nu)d\bx\le\\
&\le \|f \|_q\left \|\sum_{\nu=1}^{m}w_\nu\varepsilon_\nu \bigl| f(\xi^\nu)
\bigr|^{q-1}\mathcal V_\bN (\bx - \xi^\nu) \right\|_{q'}.
\end{align*}
Using (\ref{6.9}) we see that the last term is
$$
\le 3^d \| f \|_q \left(\sum_{\nu=1}^m w_\nu \bigl| f(\xi^\nu)
\bigr|^{q}\right)^{(q-1)/q} ,
$$
which implies the required inequality.

Lemma \ref{L6.1} is proved.

\end{proof}

{\bf Universality of the Fibonacci point sets.} We use Lemmas \ref{L2.1} and \ref{L6.1}.
Lemma \ref{L2.1} and identity (\ref{2.2}) imply that for any 
$$
f\in \Tr(\Gamma(\gamma b_n)):= \left\{f: f(\bx)= \sum_{\bk\in \Gamma(\gamma b_n)} c_\bk e^{(\bk,\bx)}\right\}
$$
we have
$$
\Phi_n(f) = \hat f(\mathbf 0).
$$
Therefore, condition (\ref{6.4}) of Lemma \ref{L6.1} is satisfied for $m=b_n$,
$I_m=\Phi_n$, $w_\nu =1/m$, $\nu=1,\dots,m$, with $\bN=(2^{s_1},\dots,2^{s_d})$
under condition $\bs\in \N^d_0$ is such that $3\cdot 2^{\|\bs\|_1} \le \gamma b_n$. 
Lemma \ref{L6.1} implies the following result.

\begin{Theorem}\label{T7.1} The Fibonacci point set $\cF_n$ provides the universal discretization in $L_q$, $1\le q\le\infty$, for the collection $\cC(r,2)$ with $r$ satisfying 
the condition $3\cdot 2^r \le \gamma b_n$.
\end{Theorem}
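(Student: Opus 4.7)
The plan is to derive Theorem \ref{T7.1} as a direct application of Lemma \ref{L6.1}, taking as cubature formula the Fibonacci formula $\Phi_n$ with uniform nonnegative weights $w_\nu = 1/b_n$. The starting ingredient is the classical exactness property on the hyperbolic cross: combining identity (\ref{2.2}), the characterization (\ref{2.3}) of $\Phi(\bk)$, and Lemma \ref{L2.1}, one sees that every $f$ with Fourier support in $\Gamma(\gamma b_n)$ is integrated exactly by $\Phi_n$, i.e.\ $\Phi_n(f) = \hat f(\mathbf 0)$ for all $f \in \Tr(\Gamma(\gamma b_n))$. This is the only substantive input from the Fibonacci construction that the proof needs.

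Next, I would fix an arbitrary $\bs \in \N_0^2$ with $\|\bs\|_1 = r$ and put $\bN = (2^{s_1}, 2^{s_2})$, so that $\Tr(R(\bs)) \subset \Tr(\bN)$. A polynomial in the enlarged space $\Tr(3\bN)$ has all its frequencies $\bk$ satisfying $|k_j| \le 3 \cdot 2^{s_j}$, hence $\prod_j \max(|k_j|,1) \le 9 \cdot 2^r$. Under the hypothesis $3 \cdot 2^r \le \gamma b_n$ (with $\gamma$ chosen slightly smaller than the constant supplied by Lemma \ref{L2.1}, to absorb the factor $9$), this places $\Tr(3\bN)$ inside $\Tr(\Gamma(\gamma b_n))$, so the exactness observation gives condition (\ref{6.4}) of Lemma \ref{L6.1}. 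Applying Lemma \ref{L6.1} yields the two-sided bound (\ref{6.5}) for every $f \in \Tr(\bN)$, and in particular for every $f \in \Tr(R(\bs))$, with constants depending only on $d = 2$.

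The crucial point is that none of the constants or cubature data depends on the particular $\bs$ with $\|\bs\|_1 = r$: the knot set $\cF_n$ and the weights $1/b_n$ are the same for all subspaces in $\cC(r,2)$. This is exactly what turns Lemma \ref{L6.1} into a \emph{universal} discretization statement: the bound (\ref{6.5}) holds uniformly across the collection, which is precisely (\ref{1.1u}) for $1 \le q < \infty$. The case $q = \infty$ is covered by the $q = \infty$ branch of Lemma \ref{L6.1} (or, if one prefers, by passing to the limit $q \to \infty$ in the finite-$q$ bound, since the constants are independent of $q$).

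There is no serious obstacle here: the hard analytic work is already packaged into Lemma \ref{L2.1} (exactness of $\Phi_n$ on $\Gamma(\gamma b_n)$) and Lemma \ref{L6.1} (the de la Vall\'ee Poussin/Riesz--Thorin machinery converting exactness on the threefold dilated box into two-sided $L_q$ discretization). The only care required is the constant bookkeeping: the factor of $3$ in the theorem's hypothesis matches the factor $3$ appearing in $\Tr(3\bN)$ in (\ref{6.4}), and the additional factor $3$ (giving $9 \cdot 2^r$ inside $\Gamma$) is absorbed into the absolute constant $\gamma$. This makes the proof essentially a one-line corollary of the two lemmas.
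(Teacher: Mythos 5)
Your proposal is correct and follows exactly the paper's route: use Lemma \ref{L2.1} together with (\ref{2.2})--(\ref{2.3}) to get $\Phi_n(f)=\hat f(\mathbf 0)$ for $f\in \Tr(\Gamma(\gamma b_n))$, then feed this exactness into Lemma \ref{L6.1} with $\bN=(2^{s_1},2^{s_2})$ and uniform weights $w_\nu=1/b_n$, uniformity over all $\bs$ with $\|\bs\|_1=r$ giving universality. Your extra care with the factor $3^d=9$ arising from $\Tr(3\bN)\subset\Tr(\Gamma(\gamma b_n))$ is a sensible tightening of the constant bookkeeping that the paper passes over, but it does not change the argument.
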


{\bf Acknowledgment.} The author would like to thank the Erwin Schr{\"o}dinger International Institute
for Mathematics and Physics (ESI) at the
University of Vienna for support. This paper was completed, when the author participated in 
the ESI-Semester "Tractability of High Dimensional Problems and Discrepancy",
September 11--October 13, 2017. 


\end{document}